\numberwithin{figure}{section}
\numberwithin{equation}{section}
\definecolor{ruta}{rgb}{0.309, 0.459, 0.208}
\definecolor{ruta1}{rgb}{0.205,0.301,0.201}
\definecolor{ruta2}{rgb}{0.409, 0.459, 0.208}
\definecolor{vino}{rgb}{0.256,0,0}
\definecolor{siva}{rgb}{0.205,0.201,0.201}
\definecolor{siva'}{rgb}{0.250,0.116,0.116}
\let\cal\mathcal
\def\Hscr{{\cal H}}
\def\Kscr{{\cal K}}
\def\Lscr{{\cal L}}
\def\Mscr{{\cal M}}
\def\Oscr{{\cal O}}
\def\Sscr{{\cal S}}
\let\blb\mathbb
\def \ZZ{{\blb Z}}
\def \NN{{\blb N}}
\def \RR{{\blb R}}
\def\Lotimes{\overset{L}{\otimes}}
\def\Spec{\operatorname {Spec}}
\def\Ext{\operatorname {Ext}}
\def\Hom{\operatorname {Hom}}
\def\End{\operatorname {End}}
\def\RHom{\operatorname {RHom}}
\def\relint{\operatorname {relint}}
\def\End{\operatorname {End}}
\def\gldim{\operatorname {gl\,dim}}
\def\r{\rightarrow}
\def\pdim{\operatorname{pdim}}
\newcommand{\la}{\langle}
\newcommand{\ra}{\rangle}
\newcommand{\D}{{\rm D}}
\newtheorem{lemma}{Lemma}[section]
\newtheorem{proposition}[lemma]{Proposition}
\newtheorem{theorem}[lemma]{Theorem}
\newtheorem{corollary}[lemma]{Corollary}
\theoremstyle{definition}
\newtheorem{definition}[lemma]{Definition}
\newtheorem{conjecture}[lemma]{Conjecture}
\theoremstyle{remark}
\newtheorem{remark}[lemma]{Remark}
\newdimen\uboxsep \uboxsep=1ex
\def\uboxn#1{\vtop to 0pt{\hrule height 0pt depth 0pt\vskip\uboxsep
\hbox to 0pt{\hss #1\hss}\vss}}
\def\uboxs#1{\vbox to 0pt{\vss\hbox to 0pt{\hss #1\hss}
\vskip\uboxsep\hrule height 0pt depth 0pt}}
\def\codim{\operatorname{codim}}
\let\oldmarginpar\marginpar
\def\marginpar#1{ \oldmarginpar{\tiny \raggedright #1}}
\def\Sym{\operatorname{Sym}}
\def\T{\Mscr}
\def\TT{\mathfrak{M}}
\title[On the noncommutative Bondal-Orlov conjecture]{On the noncommutative Bondal-Orlov conjecture for some toric varieties}
\author[\v{S}pela \v{S}penko and Michel Van den Bergh]{\v{S}pela \v{S}penko and Michel Van den Bergh\\ with an appendix by Jason P. Bell}
\thanks{The first author is a FWO $[$PEGASUS$]^2$ Marie Sk\l odowska-Curie fellow at the Free University of Brussels
(funded by the European Union Horizon 2020 research and innovation
programme under the Marie Sk\l odowska-Curie grant agreement
No 665501 with the Research Foundation Flanders (FWO)). During part of this work she was also a postdoc with Sue Sierra at the University of Edinburgh.}
\address{Departement Wiskunde, Vrije Universiteit Brussel, 
Pleinlaan $2$, B-1050 Elsene}
\email[]{spela.spenko@vub.ac.be}
\address{Departement WNI, Universiteit Hasselt, Universitaire Campus \\
B-3590 Diepenbeek}
\email[]{michel.vandenbergh@uhasselt.be}
\thanks{The second author is a senior researcher at the Research Foundation Flanders (FWO).  While working on this project he was supported by
the FWO grant G0D8616N: ``Hochschild cohomology and deformation theory of triangulated categories''.}
\address{Department of Pure Mathematics, University of Waterloo, ON, Canada N2L 3G1}
\email[]{jpbell@uwaterloo.ca}
\begin{document}

\begin{abstract}
  We show that all toric noncommutative crepant resolutions (NCCRs) of
 affine GIT quotients of ``weakly symmetric'' unimodular
torus representations are derived equivalent. This
  yields evidence for a non-commutative extension of a well known
  conjecture by Bondal and Orlov stating that all crepant resolutions
of a Gorenstein singularity are derived equivalent.  We prove our result by showing
  that \emph{all} toric NCCRs of the affine GIT quotient are derived equivalent to a fixed Deligne-Mumford GIT
  quotient stack associated to a generic character of the torus. This extends a result by Halpern-Leistner and Sam
  which showed that such GIT quotient stacks are a geometric
  incarnation of a family of \emph{specific} toric NCCRs constructed
  earlier by the authors.
\end{abstract}

\maketitle

\section{Introduction}
The
Bondal-Orlov conjecture \cite{BonOrl} asserts that all crepant 
resolutions of Gorenstein singularities are derived equivalent. 
Later the
conjecture was further generalized to a noncommutative setting.

\begin{definition}
Let $S$ be a normal noetherian Gorenstein domain. A {\em noncommutative crepant resolution} (NCCR) of $S$ is an $S$-algebra of a finite global dimension of the form $\End_{S}(M)$, which is Cohen-Macaulay as an $S$-module, where $M$ is a nonzero finitely generated reflexive $S$-module.
\end{definition}

In \cite[Conjecture 4.6]{VdB04} it is then conjectured that crepant resolutions should also be equivalent to  noncommutative crepant resolutions. In \cite[Conjecture 1.4]{IyamaWemyssNCBO} the noncommutative part is singled out as ``noncommutative Bondal-Orlov''.

\begin{conjecture}\label{conj}\cite{VdB04,IyamaWemyssNCBO}
All noncommutative crepant resolutions are derived equivalent.
\end{conjecture}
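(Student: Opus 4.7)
The plan is to prove the conjecture in the toric ``weakly symmetric'' unimodular setting announced in the abstract by showing that every toric NCCR of the affine GIT quotient $R := \CC[V]^T$ (for $V$ a unimodular weakly symmetric representation of a torus $T$) is derived equivalent to a single fixed Deligne--Mumford GIT quotient stack $\Xscr = [V^{ss}(\chi_0)/T]$ associated to a generic character $\chi_0 \in \hat T$. Since derived equivalence is transitive, funneling all NCCRs through this common model yields the conclusion and generalizes the Halpern-Leistner--Sam theorem, which produced such an equivalence only for the \emph{specific} family of toric NCCRs constructed earlier by the authors.

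The comparison functor is produced via ``magic windows''. Every toric NCCR of $R$ is, up to Morita equivalence, of the form $\End_R(M)$ with $M = \bigoplus_{\chi \in L} (\CC[V] \otimes \chi)^T$ for a finite multiset $L \subset \hat T$. To such an $L$ one associates a full subcategory $\Dscr_L \subset D^b([V/T])$ cut out by weight conditions along the one-parameter subgroups appearing in the Kirwan--Ness stratification of the unstable locus of $V$ with respect to $\chi_0$. The first step is to show that the restriction functor $\Dscr_L \to D^b(\Xscr)$ is an equivalence, by invoking Halpern-Leistner's GIT window theorem and using the unimodularity together with weak symmetry to guarantee that, for every admissible window $L$, the relative weights on each unstable stratum stay in the allowed range. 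The second step is to identify $\Dscr_L$ with $D^b(\End_R(M))$ via the tilting bundle $\bigoplus_{\chi \in L} \Oscr_{[V/T]} \otimes \chi$, whose endomorphism algebra is tautologically $\End_R(M)$; tilting formalism then promotes the window equivalence to a derived equivalence between $\End_R(M)$ and $\Xscr$.

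The main obstacle is the \emph{classification} half: one must show that every toric NCCR arises from a magic window in this way. Concretely, given a reflexive $T$-equivariant module $M$ whose $R$-endomorphism ring is Cohen--Macaulay of finite global dimension, one has to decompose $M$ into rank-one reflexives, label the summands by characters, and then verify that the resulting character set fits inside a common admissible polytope (up to a translation by $\hat T$). The finite global dimension requirement is the delicate constraint, as it excludes character configurations that meet certain hyperplane arrangements determined by the weights of $V$; this is where Bell's appendix enters, providing the growth/Hilbert-series estimates that rule out modules supported outside the window. Once this dictionary between toric NCCRs and magic windows is in place, derived equivalence of any two toric NCCRs follows immediately by passing through $D^b(\Xscr)$.
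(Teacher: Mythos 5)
Your proposal shares the high-level outline of the paper — funnel every toric NCCR through the fixed Deligne--Mumford stack $X^{ss,\chi}/T$ — but the mechanism you propose breaks down at exactly the point you flag as ``the main obstacle.'' You want to realize every toric NCCR $\End_R(M(U))$ as a magic/grade-restriction window $\Dscr_L\subset D^b([V/T])$ and then invoke the GIT window equivalence. But this classification step is \emph{false}: not every toric NCCR comes from a window. Section~\ref{sec:example} of the paper gives an explicit counterexample with $T=G_m$ and weights $-3,-2,-2,2,2,3$, where the character set $\{-4,-2,-1,0,1,2,4\}$ is a maximal Cohen--Macaulay clique (hence yields an NCCR) but is not an interval — its diameter is $8>N-1=6$, so it cannot fit into any translate of $\nu+\tfrac12\overline{\Sigma}$, and equivalently fails the grade-restriction inequalities on the Kirwan--Ness strata. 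Halpern-Leistner--Sam's result covers only the \emph{specific} NCCRs built from a fixed window shape; as the introduction explains, the whole point of this paper is that one must handle a strictly larger class of Cohen--Macaulay modules of covariants that the window machinery does not see.

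The paper's actual route avoids the classification entirely. It constructs, for an arbitrary toric NCCR $\Lambda$, a functor $-\Lotimes_\Lambda\Escr\colon\D(\Lambda)\to\D(X^{ss,\chi}/T)$ and proves full faithfulness directly, by comparing local cohomology supported in the nullcone against local cohomology supported in the $\chi$-unstable locus via the HKKN stratification (Proposition~\ref{faithful}); the engine is Van den Bergh's Cohen--Macaulayness criterion for modules of covariants \cite{Vdb1}, which simplifies under weak symmetry (Corollaries~\ref{decor}, \ref{form}). Since $\gldim\Lambda<\infty$ the image is an admissible subcategory, and because the generic GIT quotient stack is Calabi--Yau it admits no nontrivial semi-orthogonal decomposition \cite[Corollary~A.5]{SVdB5}, forcing the embedding to be an equivalence. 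This replaces ``the source already fills the target'' (your window identification) with ``the target cannot properly contain the source.'' You also mischaracterize the role of Bell's appendix: it is not Hilbert-series estimates feeding a window classification, but a self-contained combinatorial description of maximal Cohen--Macaulay cliques for $T=G_m$ (each clique meets every residue class mod $N$ exactly once), which powers the alternative, mutation-based proof in \S\ref{app} and is orthogonal to the main argument.
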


NCCRs do not always exist, however in \cite{SVdB} they were
constructed for a large class of quotient singularities for reductive
groups.  Notably they exist (under mild genericity condition, see
Definition \ref{def:generic}) for quotient singularities for
quasi-symmetric representations $W$ of a torus $T$. Here
``quasi-symmetric'' means that the sum of weights of $W$ on each line
through the origin is zero (see \cite{Kite} for a geometric
interpretation of quasi-symmetric representations).  The NCCRs in
loc.\ cit.\ are given by \emph{modules of covariants} $M=(U \otimes_k
\Sym(W))^T$ for a representation $U$ of $T$, and in this setting (as
$T$ is a torus and hence $U$ is a sum of characters) are called {\em
  toric}.

In this note we prove Conjecture \ref{conj} for toric NCCRs in the above context(thus in particular the existence of toric NCCRs is guaranteed). In fact, our assumptions are slightly weaker and apply to
``weakly symmetric'' (generalizing quasi-symmetric)
representations, see Definition \ref{def:quasisym} (for which however
toric NCCRs do not always exist, see \cite[Example 10.1]{SVdB}).  The
following is our main
result. 
\begin{theorem}\label{thm}
Let $W$ be a unimodular, generic and weakly symmetric representation $W$ of a torus $T$. Then all toric NCCRs 
of $\Sym(W)^T$ are derived equivalent.
\end{theorem}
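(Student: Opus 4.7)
The plan is to prove the theorem by showing that \emph{every} toric NCCR of $\Sym(W)^T$ is derived equivalent to a single fixed geometric object — the Deligne–Mumford GIT quotient stack $\Xs := [W^{\text{ss}}(\theta)/T]$ for some generic character $\theta$ of $T$ — from which the pairwise derived equivalence of arbitrary toric NCCRs follows by transitivity. This turns the theorem into a \emph{tilting statement}: we must exhibit, for each toric NCCR $\Lambda = \End_{\Sym(W)^T}\!\bigl(\bigoplus_{\chi\in L}(\chi\otimes\Sym(W))^T\bigr)$, a tilting object $\Tscr_L$ on $\Xs$ such that $\End_{\Xs}(\Tscr_L)\cong \Lambda$ and $\Tscr_L$ generates $D^b(\coh\Xs)$.

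The natural candidate is $\Tscr_L := \bigoplus_{\chi\in L}\Oscr_{\Xs}(\chi)$, the restriction to $\Xs$ of the line bundles on the quotient stack $[W/T]$ corresponding to the characters in the defining set $L$. To identify its endomorphism algebra with $\Lambda$, I would use the fact that $W^{\text{ss}}(\theta)\to W\quot T:=\Spec\Sym(W)^T$ has affine fibers on the GIT level, so that global sections of $\uHom_{\Xs}(\Oscr(\chi'),\Oscr(\chi)) = \Oscr_{\Xs}(\chi-\chi')$ recover the module of covariants $((\chi-\chi')\otimes\Sym(W))^T$; this uses that $W$ is unimodular (so $W^{\text{ss}}(\theta)$ has the expected codimension-bound of unstable locus) and that for weakly symmetric representations the modules of covariants $(\chi\otimes\Sym(W))^T$ appearing in toric NCCRs are Cohen–Macaulay, hence pick up no higher cohomology through the restriction. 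The framework for carrying this out rigorously is the \emph{magic windows} / grade-restriction formalism of Halpern–Leistner–Sam, which already identifies \emph{one} distinguished family of toric NCCRs with the category $D^b(\coh\Xs)$ via an equivalence
\begin{equation*}
\Wscr_{\mathbf{s}}\;\xrightarrow{\;\sim\;}\; D^b(\coh\Xs),
\end{equation*}
where $\Wscr_{\mathbf{s}}\subset D^b(\coh[W/T])$ is a window subcategory attached to a shift parameter $\mathbf{s}$.

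The key step — and the main obstacle — is to show that for an \emph{arbitrary} toric NCCR (not just those of the magic-window form), the set $L$ of characters can be placed inside some window $\Wscr_{\mathbf{s}}$, or at least that the restriction functor $D^b(\coh[W/T])\to D^b(\coh\Xs)$ is fully faithful on the summands $\Oscr(\chi)$, $\chi\in L$, and their Hom's. Concretely, I would argue as follows. By the combinatorial characterization of toric NCCRs in \cite{SVdB}, the set $L$ satisfies a convex-geometric condition (it corresponds to a tiling/zonotope condition on the weight polytope of $W$). Using this condition one chooses $\mathbf{s}$ so that $L$ lies in a translate of the magic window, or alternatively one produces a sequence of \emph{tilting mutations} (equivalently, Fourier–Mukai type autoequivalences of $D^b(\coh\Xs)$) that transport the known magic-window tilting object to $\Tscr_L$ while preserving the equivalence with $D^b(\coh\Xs)$. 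The weak-symmetry hypothesis is what guarantees that these mutations preserve the Calabi–Yau structure of the window and that no cohomology obstruction appears when passing between windows.

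Once $\Tscr_L$ is proved to be tilting on $\Xs$ with the correct endomorphism algebra, Beilinson–Bondal type reasoning yields $D^b(\coh\Xs)\simeq D^b(\mod\Lambda)$. Since the right-hand side ranges over all toric NCCRs while the left-hand side is independent of $L$, the theorem follows. The hardest and most delicate part is the second paragraph: controlling the restriction functor and the window placement for genuinely arbitrary $L$, since the Halpern–Leistner–Sam result a priori only covers the window-adapted toric NCCRs, and weak symmetry (being weaker than quasi-symmetry) leaves less room than one would like in the convex-geometric arguments.
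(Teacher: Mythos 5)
Your overall plan --- embed each toric NCCR into the derived category of a fixed Deligne--Mumford GIT quotient stack $X^{ss,\chi}/T$ and deduce the theorem by transitivity --- is indeed the paper's strategy, but the concrete steps you propose would fail. Your primary route of choosing $\mathbf{s}$ so that $L$ lies inside a magic window $\nu+\tfrac{1}{2}\overline{\Sigma}$ is explicitly contradicted by the example in \S\ref{sec:example}: for $T=G_m$ with weights $-3,-2,-2,2,2,3$ the set $L=\{-4,-2,-1,0,1,2,4\}$ is a maximal Cohen--Macaulay clique (hence gives a toric NCCR) but is not an interval, so it is not of the form $(\nu+\tfrac{1}{2}\overline{\Sigma})\cap X(T)$. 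This is exactly why Proposition \ref{prop:git} does not follow from \cite[Corollary 4.2, Remark 4.3]{HLSam}. There is also no ``combinatorial characterization of toric NCCRs'' in \cite{SVdB} to invoke; that paper constructs a specific family, and even classifying toric NCCRs for $T=G_m$ requires the combinatorial work of Appendix~\ref{sec:Jason}. Your fallback via tilting mutations is carried out in the paper only for $T=G_m$ (\S\ref{app}), as a separate alternative proof, and is not known to extend to higher-rank tori.

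The paper's actual proof of full faithfulness is completely different: Proposition~\ref{faithful} shows that if $R^T_\mu$ is Cohen--Macaulay then $R^T_\mu\to R\Gamma(X^{ss,\chi},\Oscr_X)^T_\mu$ is an isomorphism, proved by computing local cohomology supported in the $\chi$-unstable locus via the HKKN stratification (Lemma~\ref{HKKNstrata}, Corollary~\ref{local}) and comparing with the local cohomology supported in the nullcone, using the Cohen--Macaulayness criterion of \cite{Vdb1}, which crucially simplifies under weak symmetry (Corollary~\ref{decor}, Corollary~\ref{form}). You also do not explain why your $\Tscr_L$ would generate $D^b(\coh X^{ss,\chi}/T)$, which is the hardest part of the tilting route. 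The paper sidesteps generation entirely: since $\gldim\Lambda<\infty$, the fully faithful embedding has admissible image, and $X^{ss,\chi}/T$ is a Calabi--Yau Deligne--Mumford stack with no nontrivial semi-orthogonal decompositions by \cite[Corollary A.5]{SVdB5}, so the embedding is automatically an equivalence. Both ingredients --- the local cohomology vanishing machinery and the indecomposability of the target --- are missing from your write-up, and they are what makes the argument work beyond the window-adapted cases.
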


To prove the theorem we follow a strategy of Halpern-Leistner and Sam \cite{HLSam} and embed any toric NCCR  of $\Sym(W)^T$ into the derived category of a \emph{fixed} generic GIT stack quotient of $W^*=\Sym W$.
Such GIT quotient stacks are Calabi-Yau and hence they do not admit non-trivial semi-orthogonal decompositions. Therefore the constructed  embedding is in fact an equivalence, proving the theorem.

We note that Halpern-Leistner and Sam already showed that generic GIT stack quotients of $W^\ast$ for quasi-symmetric $W$ are derived equivalent to the specific toric 
 NCCRs constructed in \cite{SVdB} (already mentioned above).
But since we start with an arbitrary toric NCCR we are dealing
 with a more general class of Cohen-Macaulay modules than in
 \cite{HLSam} and therefore the construction of the embedding is
 more intricate and requires the use of the
 Cohen-Macaulayness criterion for modules of covariants from \cite{Vdb1}. Luckily this criterion turns out to considerably simplify in the weakly symmetric case.

\medskip
In \S\ref{app} we present an alternative proof of Theorem \ref{thm} in the case $T=G_m$. 
It is a based on a crucial combinatorial lemma provided by Jason Bell in Appendix \ref{sec:Jason}, which is used to 
 describe  ``maximal Cohen-Macaulay cliques'' of $X(T)$  (see \eqref
{1}).
Moreover, we introduce the notion of {\em toric maximal modification algebras} (toric MMAs)  (see the paragraph after Definition \ref{def:MMA}) and show that in the case $T=G_m$ they coincide with NCCRs (see Proposition \ref{prop:MMA}).

\medskip

We note that Iyama and Wemyss \cite{IyamaWemyssNCBO} provide a sufficient criterion under which Conjecture \ref{conj} holds, which in particular covers dimension $\leq 3$. However, this criterion does not seem to be easily applicable to our setting. 

\section{Acknowledgement}
We thank J\o rgen Vold Rennemo for interesting and useful discussions.

\section{Notation and conventions}
\label{sec:notations}
Throughout $k$ is an algebraically closed field of characteristic $0$. 
Let $W$ be a $d$-dimensional $T$-representation, $\dim T=s$, and let $R=\Sym W$,  $X=\Spec R=W^*$. Let $(\alpha_i)_{i=1}^d$ be the weights of $W$ and let $(w_i)_i$
be the corresponding weight vectors. 

\medskip

We denote by $X(T)$ (resp. $Y(T))$ the character group (resp. the group of one-parameter subgroups) of $T$. 
There is a natural pairing $Y(T)\times X(T)\to \ZZ$, which extends to $Y(T)_\RR\times X(T)_\RR\to \RR$, we denote it by $\la\ \!,\ \!\!\ra$. On $Y(T)_\RR$ we choose a positive definite quadratic form, and we denote the corresponding norm by $\|\;\|$. 

If $U$ is a $1$-dimensional representation of $T$ given by
$\mu\in X(T)$ then we write $M(\mu)$ instead of $M(U)$. In our main
reference \cite{Vdb1} the results are written in terms of
semi-invariants $R_\mu^T$, defined as the sum of all irreducible
representations of $T$ in $R$ with character $\mu$; i.e.
$R_\mu^T\cong M(-\mu)$. We will use both notations, depending on the
context. More generally, if $M$ is any $T$-module, we write $M_\mu^T$ for the sum
of all irreducible representations of $T$ in $M$ with character $\mu$.

\medskip 

Let us denote 
\begin{align*}
X^{\lambda,+}&=\{x\in X\mid  \lim_{t\to 0}\lambda(t)x \text{ exists}\},\\
X^{\lambda,>0}&=\{x\in X\mid  \lim_{t\to 0}\lambda(t)x=0\},
\end{align*}
and let $W^{\lambda,>}$, $W^{\lambda,+}$  be the subspace of $W$ generating the defining ideal $I^{\lambda,>}$, $I^{\lambda,+}$  of $X^{\lambda,>0}$ and $X^{\lambda,+}$. Note that 
these subspaces are spanned by the weight vectors $w_i$ such that $\la\lambda,\alpha_i\ra\geq 0$, $\la \lambda,\alpha_i \ra >0$, respectively.

We write $X^u=\{x\mid 0\in \overline{Tx}\}$ for the $T$-unstable locus 
(also called ``nullcone'') of~$X$. A defining ideal for $X^u$ is $R(R^T)^+$ where $(R^T)^+$ is the augmentation
ideal of~$R^T$.
By the Hilbert-Mumford criterion, $X^u=\cup_{\lambda\in Y(T)}X^{\lambda,>0}$.
Let $\chi\in X(T)$. Then by definition $X^{ss,\chi}$ consists of the points $x\in X$ such that if $\lambda\in Y(T)$
is such that $\lim_{t\r 0} \lambda(t)x$ exists then $\langle \lambda,\chi \rangle\ge 0$. We write $X^{u,\chi}:=X\setminus X^{ss,\chi}=\bigcup_{\lambda:\langle
\lambda,\chi\rangle< 0}X^{\lambda,+}$ and call it the $\chi$-unstable locus.

\medskip

For further reference we introduce some extra notation, mostly 
consistent with \cite{Vdb1}: $T_\lambda=\{i\mid \la\lambda,
\alpha_i\ra<0\}$,
$d_\lambda=\operatorname{codim}(X^{\lambda,>0},X)=d-|T_\lambda|$.
Moreover, $T_\lambda^0=\{i\mid \la\lambda, \alpha_i\ra=0\}$,
$d_\lambda^0=|T_\lambda^0|$. $T_\lambda^+=\{i\mid \la\lambda, \alpha_i\ra>0\}$.
On $Y(T)_\RR$ we let $\lambda\sim
\lambda'$ iff $T_\lambda=T_{\lambda'}$, $B=\{\lambda\in
Y(T)_\RR\mid \|\lambda\|<1\}$, $\Lambda=B/{\sim}$,
$B_\lambda=\{\mu\in B\mid \mu\sim \lambda\}$,
$\Phi_\lambda=\bar{B}_\lambda-{B}_\lambda$,
$H_\lambda={\rm span}\{(\alpha_i)_{i\in T_\lambda^0}\}$, 
$h^0_\lambda=\dim H_\lambda$. 

\medskip

We  introduced some properties
\begin{definition}\label{def:generic}
A $T$-representation $W$ is {\em generic} if the set $X^s$ of points in $X$ with closed orbit and trivial stabilizer is nonempty and satisfies $\codim(X-X^s)\geq 2$ (equivalently for all $\lambda\in Y(T)\setminus \{0\}$  there exist two $1\leq i\leq d$ such that $\la\lambda,\alpha_i\ra>0$).
\end{definition}
\begin{definition}\label{def:quasisym}
A $T$-representation $W$ is {\em quasi-symmetric} if for every line $\ell\subset X(T)_\RR$ through
the origin we have $\sum_{\alpha_i\in\ell}\alpha_i=0$. It is \emph{weakly symmetric} 
if for every $\ell$ the cone spanned by ${\alpha_i\in\ell}$ is either zero or $\ell$.
\end{definition}
We denote $\Sigma:=\{\sum_i a_i\alpha_i\mid a_i\in ]-1,0]\}$. 
\begin{definition}
\label{def:generic}
We say that $\chi\in X(T)$ is {\em generic for $W$} if it is parallel to $\Sigma$ but not parallel to any face of $\Sigma$.
\end{definition}

\section{Local cohomology  in the weakly symmetric case}
As alluded to in the introduction we will construct 
an embedding of the derived category of a toric NCCR
$\Lambda=\End_{R^T}(\oplus_\gamma M(\gamma))$, into the derived category of a fixed 
GIT quotient stack of the form $X^{ss,\chi}/T$. To relate
the Cohen-Macaulayness of $\Lambda$ to the necessary vanishing on 
$X^{ss,\chi}/T$ (see  Proposition \ref{prop:git} and its proof) 
it will turn out that we need to compare (as $T$-modules) the local cohomology of 
$\Oscr_X$ supported in $X^u$  and in $X^{u,\chi}$.
We have a good understanding of the former by \cite{Vdb1} and  we employ the HKKN\footnote{Hesselink-Kempf-Kirwan-Ness} stratification \cite{Kirwan} for the 
latter.
\subsection{Support in the nullcone}\label{nullcone}
In the first part of this section $W$ is arbitrary; i.e. not necessarily weakly symmetric or generic. 
The local cohomology modules of $\Oscr_X$ supported in the nullcone  
 are described in \cite{Vdb1}. In particular, they provide a criterion for Cohen-Macaulayness of modules of covariants. At the end of this section we show that 
when $W$ is weakly symmetric and generic the criterion becomes more concrete.
\begin{lemma}\cite[Theorem 3.4.1]{Vdb1}\label{lemma}
Let $I=R(R^T)^+$ be the defining ideal of $X^u$ (see \S\ref{sec:notations}). Then
\begin{equation}\label{Gequi}
H^i_{(R^T)^+}(R_\mu^T)\cong H^i_I(R)^T_\mu, 
\end{equation}
and hence $R_\mu^T$ is Cohen-Macaulay if and only if $H^i_I(R)^T_\mu=0$ for $0\leq i<\dim R^T$.
\end{lemma}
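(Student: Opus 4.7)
The plan is to exploit the fact that the ideal $\mathfrak{m}:=(R^T)^+$ generating $I$ consists of $T$-invariant elements, so that a single Čech complex computes both sides after passing to the $\mu$-isotypic piece. First I would choose homogeneous generators $f_1,\dots,f_n\in R^T$ of $\mathfrak{m}$; since $I=R\mathfrak{m}$ by definition, the same tuple generates $I$ in $R$. Therefore $\check{C}(f_1,\dots,f_n;R)$ computes $H^\ast_I(R)$ while $\check{C}(f_1,\dots,f_n;R_\mu^T)$ computes $H^\ast_\mathfrak{m}(R_\mu^T)$ as an $R^T$-module.

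Next, because the $f_j$ are $T$-invariant, each localization $R_{f_{i_1}\cdots f_{i_k}}$ inherits a $T$-module structure with a weight decomposition, so $\check{C}(f_1,\dots,f_n;R)$ is a complex of $T$-equivariant modules. Linear reductivity of $T$ in characteristic zero makes the $\mu$-isotypic functor $(-)_\mu^T$ exact; it also commutes with localization at $T$-invariant elements. Combining these observations gives an identification of complexes
\[
\check{C}(f_1,\dots,f_n;R)_\mu^T \cong \check{C}(f_1,\dots,f_n;R_\mu^T),
\]
and passing to cohomology yields
\[
H^i_I(R)_\mu^T \cong H^i_\mathfrak{m}(R_\mu^T),
\]
which is precisely \eqref{Gequi}.

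For the Cohen-Macaulayness statement I would invoke the standard local cohomology criterion: a nonzero finitely generated graded $R^T$-module $N$ of full support is maximal Cohen-Macaulay precisely when $H^i_\mathfrak{m}(N)=0$ for $0\le i<\dim R^T$. Since $R_\mu^T$ is finitely generated over $R^T$ (Hilbert–Nagata, $T$ being reductive) and $R^T$ is graded with irrelevant ideal $\mathfrak{m}$, combining this criterion with the isomorphism just established gives the stated equivalence.

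The main subtlety to watch out for is dimensional bookkeeping: ``Cohen-Macaulay'' in the statement must be read as \emph{maximal} Cohen-Macaulay of $R^T$-dimension $\dim R^T$, which is the natural notion when $R_\mu^T$ is reflexive of full support on $\Spec R^T$—the setting of modules of covariants relevant to toric NCCRs. Beyond this interpretive point, the argument is essentially bookkeeping that leverages Čech cohomology, the exactness of $T$-isotypic projection, and the local cohomology characterization of depth; I do not expect any genuinely deep step.
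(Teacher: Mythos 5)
Your argument is correct, and it fills in the proof that the paper simply cites from \cite[Theorem 3.4.1]{Vdb1} rather than reproving. The Čech-complex identification using $T$-invariant generators of $\mathfrak{m}=(R^T)^+$, together with exactness of $(-)_\mu^T$ (linear reductivity) and its commutation with localization at invariant elements, is precisely the mechanism behind the cited result, and your dimensional remark about ``Cohen-Macaulay'' meaning maximal Cohen-Macaulay is the right interpretive point since $R_\mu^T$ is torsion-free over the domain $R^T$ and hence of full support whenever nonzero.
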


\begin{theorem}\cite[Proposition 3.3.1, Theorem 3.4.1
]{Vdb1}\label{theorem}
There is a $T$-equivariant filtration on $H^i_{X^u}(X,\Oscr_X)$ together with a $\ZZ^d$-graded isomorphism of $R$-modules
\begin{equation}\label{loccoh}
{\rm gr}\, H^i_{X^u}(X,\Oscr_X)\cong\bigoplus_{\lambda\in \Lambda}\tilde{H}^{i+s-d_\lambda-1}(\Phi_\lambda,k)\otimes H^{d_\lambda}_{X^{\lambda,>0}}(X,\Oscr_X).
\end{equation}
Furthermore there is a $\ZZ^d$-graded isomorphism 
\begin{equation}\label{loccohstrata}
\begin{aligned}
H^{d_\lambda}_{X^{\lambda,>0}}(X,\Oscr_X)&\cong 
(\wedge^{d_\lambda} W^{\lambda,>})^*\otimes \bigoplus_t \Sym^t (W^{\lambda,>})^*\otimes R/I^{\lambda,>}\\
&\cong (\wedge^{d_\lambda} W^{\lambda,>})^*\otimes \bigoplus_t \Sym^t 
(W^{\lambda,>\ast}\oplus W/W^{\lambda,>0})
\end{aligned}
\end{equation}
\end{theorem}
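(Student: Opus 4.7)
My plan is to establish the two isomorphisms \eqref{loccoh} and \eqref{loccohstrata} separately. I would begin with \eqref{loccohstrata}, which is a computation along a single stratum, and then bootstrap to \eqref{loccoh}. For \eqref{loccohstrata} the key observation is that $X^{\lambda,>0}$ is a linear subspace of the affine space $X=W^\ast$ cut out by the subspace $W^{\lambda,>}\subset R_1$; since $\dim W^{\lambda,>}=d_\lambda$ equals the codimension, any basis of $W^{\lambda,>}$ is a regular sequence in $R$. Hence $H^{d_\lambda}_{X^{\lambda,>0}}(X,\Oscr_X)$ is computed by the standard Čech complex on this regular sequence, and one reads off that it is freely generated as an $R/I^{\lambda,>}$-module by the strictly-negative monomials in $W^{\lambda,>}$, with overall generator carrying the $T$-character of $-\det W^{\lambda,>}$. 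Reorganising this as $(\wedge^{d_\lambda}W^{\lambda,>})^\ast\otimes\Sym((W^{\lambda,>})^\ast)\otimes R/I^{\lambda,>}$ gives the first line of \eqref{loccohstrata}; the second line follows via $\Sym(A)\otimes\Sym(B)\cong\Sym(A\oplus B)$.

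For \eqref{loccoh}, I would use the Hesselink--Kempf--Kirwan--Ness stratification of $X^u$ into locally closed strata indexed by equivalence classes $[\lambda]\in\Lambda$, with the closure of the stratum for $[\lambda]$ being $X^{\lambda,>0}$ (or an appropriate $T$-saturation). Ordering these strata compatibly with closure gives a filtration of $X^u$ by closed subsets, hence a spectral sequence converging to $H^i_{X^u}(X,\Oscr_X)$ whose $E_1$ page receives, for each $[\lambda]$, the local cohomology $H^{d_\lambda}_{X^{\lambda,>0}}(X,\Oscr_X)$ computed above, tensored with a combinatorial factor recording how the class $B_\lambda$ is attached to the rest of the unit ball $B\subset Y(T)_\RR$. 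A local analysis near $B_\lambda$ should identify this combinatorial factor with the reduced cohomology $\tilde{H}^{i+s-d_\lambda-1}(\Phi_\lambda,k)$ of the link $\Phi_\lambda=\overline{B}_\lambda-B_\lambda$, with the shift in cohomological degree absorbing the ambient dimension $s=\dim Y(T)_\RR$, the codimension $d_\lambda$, and a topological suspension.

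The main obstacle is verifying that this spectral sequence collapses, or more precisely that the resulting filtration on $H^i_{X^u}(X,\Oscr_X)$ has associated graded of exactly the form stated in \eqref{loccoh}. This requires a careful identification of the link of each Hesselink stratum with $\Phi_\lambda$, together with a Koszul-type vanishing argument ensuring that contributions from different strata do not interact through higher differentials. Carrying this out in full generality is the content of \cite[Proposition 3.3.1]{Vdb1}, whose approach I would follow.
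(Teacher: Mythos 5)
Your computation of \eqref{loccohstrata} is fine: $W^{\lambda,>}$ spans the degree-one part of the ideal of the coordinate subspace $X^{\lambda,>0}\subset X$, so the \v Cech/Koszul complex on a weight basis of $W^{\lambda,>}$ identifies $H^{d_\lambda}_{X^{\lambda,>0}}(X,\Oscr_X)$ with $(\wedge^{d_\lambda}W^{\lambda,>})^\ast\otimes\Sym((W^{\lambda,>})^\ast)\otimes R/I^{\lambda,>}$, and the second line is the standard $\Sym(A)\otimes\Sym(B)\cong\Sym(A\oplus B)$. But for \eqref{loccoh} --- on which the paper gives no argument of its own, only the citation to \cite{Vdb1} --- your sketch has a real gap. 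A filtration of $X^u$ by closed subsets $X_0\subset\cdots\subset X_N=X^u$ with locally closed strata $S_l=X_l\setminus X_{l-1}$ yields a spectral sequence with $E_1^{p,q}=H^{p+q}_{S_p}(X,\Oscr_X)$, and there is no room on such an $E_1$ page for a combinatorial factor $\tilde H^\ast(\Phi_\lambda,k)$ to appear as a tensor factor; it would have to be extracted from the differentials, which you do not analyse. Moreover, the HKKN strata of the nullcone of a torus representation are proper \emph{open} subsets of the various $X^{\lambda,>0}$ rather than the full closed subvarieties, so $H^\ast_{S_l}(X,\Oscr_X)$ (defined by excision on an open $U$) is genuinely different from $H^{d_\lambda}_{X^{\lambda,>0}}(X,\Oscr_X)$, the term actually occurring in \eqref{loccoh}.

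The argument in \cite[Proposition 3.3.1]{Vdb1} is organised differently. One builds a complex whose terms are the modules $H^{d_\lambda}_{X^{\lambda,>0}}(X,\Oscr_X)$, indexed not by a stratification of $X^u$ but by the regular cell decomposition of the unit ball $B\subset Y(T)_\RR$ into the cells $B_\lambda$; the differential combines the cellular coboundary of $B$ with the canonical comparison maps between local cohomology modules with nested supports. The contractibility of $B$ together with $X^u=\bigcup_\lambda X^{\lambda,>0}$ ensures the total complex computes $R\Gamma_{X^u}(X,\Oscr_X)$, and $\tilde H^{i+s-d_\lambda-1}(\Phi_\lambda,k)$ arises as cellular cohomology of the link $\Phi_\lambda=\overline B_\lambda\setminus B_\lambda$ inside that cell complex. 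Finally, the degeneration you worry about is not an abstract vanishing but a consequence of the fine $\ZZ^d$-grading: by \eqref{loccohstrata} the $\ZZ^d$-degrees $(n_1,\dots,n_d)$ occurring in $H^{d_\lambda}_{X^{\lambda,>0}}(X,\Oscr_X)$ satisfy $n_i\le -1$ for $i\notin T_\lambda$ and $n_i\ge 0$ for $i\in T_\lambda$, and since $\lambda\sim\lambda'$ iff $T_\lambda=T_{\lambda'}$, distinct cells land in pairwise disjoint $\ZZ^d$-degree regions. This forces the spectral sequence to collapse degreewise, and it also explains the careful wording of the statement --- a $T$-equivariant filtration, but only a $\ZZ^d$-graded (not $T$-equivariant) isomorphism on the associated graded --- a subtlety your sketch omits entirely.
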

In our situation the $\Phi_\lambda$ are easy to describe.
\begin{lemma} \label{lem:homeo}
Assume that $T$ acts faithfully on $W$ and that $W$ is weakly symmetric. Let $\lambda\in Y(T)_\RR$.
Then $\Phi_\lambda$
is homeomorphic with a sphere (of maximal dimension) in $H_\lambda^\perp\subset Y(T)_\RR$.
\end{lemma}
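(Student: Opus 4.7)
The plan is to identify the equivalence class $B_\lambda$ concretely as an open convex subset of the linear subspace $H_\lambda^\perp\subset Y(T)_\RR$; once this is done, $\Phi_\lambda$ will automatically be the boundary of a compact convex body in $H_\lambda^\perp$, hence a sphere of the maximal possible dimension. The target description, under the weakly symmetric and faithful hypotheses, is
\[
B_\lambda\;=\;B\,\cap\,H_\lambda^\perp\,\cap\,C_\lambda,\qquad
C_\lambda:=\{\mu\in Y(T)_\RR\mid \langle\mu,\alpha_i\rangle<0\text{ for all }i\in T_\lambda\}.
\]

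The key combinatorial ingredient is an ``opposite weight'' pairing supplied by weak symmetry (Definition \ref{def:quasisym}): for every nonzero $\alpha_i$ there exists some $j$ with $\alpha_j=-c\,\alpha_i$ for a $c>0$, since otherwise the cone spanned by $\{\alpha_k:\alpha_k\in\RR\alpha_i\}$ would be a proper subcone of the line $\RR\alpha_i$. This pairing lets me transfer sign information between $T_\lambda$, $T_\lambda^0$, and $T_\lambda^+$.

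With this in hand, the identification of $B_\lambda$ reduces to proving
\[
T_\mu=T_\lambda \iff \mu\in H_\lambda^\perp\text{ and }\langle\mu,\alpha_i\rangle<0\text{ for all }i\in T_\lambda.
\]
The implication $(\Leftarrow)$ is easy: the listed conditions give $T_\lambda\subseteq T_\mu$, while any $i\in T_\mu\setminus T_\lambda$ must lie either in $T_\lambda^0$ (ruled out by $\mu\perp H_\lambda$) or in $T_\lambda^+$ (ruled out by applying the pairing, which produces $j\in T_\lambda$ with $\langle\mu,\alpha_j\rangle>0$). For $(\Rightarrow)$, fix $i\in T_\lambda^0$ with $\alpha_i\neq 0$ and its paired weight $\alpha_j=-c\,\alpha_i$, which also lies in $T_\lambda^0$; if $\langle\mu,\alpha_i\rangle\neq 0$ then precisely one of $i,j$ would land in $T_\mu=T_\lambda$, contradicting $i,j\in T_\lambda^0$. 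Hence $\mu\in H_\lambda^\perp$.

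Granting the identification, $\bar{B}_\lambda$ is a compact convex subset of $H_\lambda^\perp$ whose interior in $H_\lambda^\perp$ equals $B_\lambda$, and this interior is nonempty since it contains a sufficiently small positive multiple of $\lambda$ itself (which lies in $H_\lambda^\perp\cap C_\lambda$ by definition of $T_\lambda$ and $T_\lambda^0$). The classical fact that a compact convex body in $\RR^n$ with nonempty interior is homeomorphic to the closed unit $n$-ball then yields $\Phi_\lambda\cong S^{s-h^0_\lambda-1}$, a sphere of maximal dimension in $H_\lambda^\perp$. The only real subtlety is the implication $(\Rightarrow)$; it is precisely here that weak symmetry is essential, for without it $B_\lambda$ could protrude out of $H_\lambda^\perp$ and $\Phi_\lambda$ would fail to be a sphere.
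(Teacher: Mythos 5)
Your proof is correct and follows essentially the same route as the paper: you establish the identification $B_\lambda = B\cap H_\lambda^\perp\cap C_\lambda$ (the paper writes the same thing), using the same ``opposite weight'' consequence of weak symmetry for both inclusions, and then conclude via the boundary-of-a-convex-body argument. The only cosmetic difference is at the end: the paper packages $B_\lambda$ as $B\cap\relint\Gamma^\vee$ where $\Gamma\subset X(T)_\RR/H_\lambda$ is the full-dimensional cone spanned by $(-\alpha_i)_{i\in T_\lambda}$ (full-dimensionality coming from faithfulness and weak symmetry), whereas you exhibit nonempty interior directly by noting that a small positive multiple of $\lambda$ lies in $B_\lambda$ --- both are fine.
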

\begin{proof}
We claim that
\[
B_\lambda=\{\mu\in B\cap H^\perp_\lambda\mid \forall i\in T_\lambda:\langle \mu,\alpha_i\rangle<0\}. 
\]
Indeed, if $\mu\in B\cap H^\perp_\lambda$ then
$T_{\lambda}^0\subset T_\mu^c$, and if $T_\lambda\subset T_\mu$ then
$T_{-\lambda}\subset T_\mu^c$ by the weak symmetricity,
respectively. As
$\{i\mid 1\leq i\leq d\}=T_\lambda\sqcup T_\lambda^0\sqcup
T_{-\lambda}$
we have $T_\mu=T_\lambda$ and thus $\mu\in B_\lambda$. For
the converse it is enough to observe that $\mu\in  B_\lambda$
implies $\mu\in H_\lambda^\perp$. Let $\mu\in B_\lambda$ and write
$\mu=\mu_0+\mu_\perp$ for $\mu_0\in H_\lambda$,
$\mu_\perp\in H_\lambda^\perp$. If $\mu_0\neq 0$ then
$\la\mu_0,\alpha_i\ra<0$ for some $i\in T_\lambda^0$ since $W$ is
weakly symmetric. Thus, $\mu\in H_\lambda^\perp$. 

\medskip

 Let
$\Gamma\subset X(T)_\RR/H_\lambda$ be the cone spanned by the images
of the weights $(-\alpha_i)_{i\in T_\lambda}$ in $X(T)_\RR/H_\lambda$. Note that
by faithfulness and weak symmetry $\Gamma$ is of maximal dimension.
$\langle-,-\rangle$ descends to a non-degenerate pairing between
$H^\perp_\lambda$ and $X(T)_\RR/H_\lambda$.  Let
$\Gamma^\vee \subset H^\perp_\lambda$ be the dual cone of $\Gamma$.
Then we have
\begin{align*}
B_\lambda&=\{\mu\in B\cap H^\perp_\lambda\mid \mu{|}(\Gamma-\{0\})>0\} \\
&=B\cap \relint \Gamma^\vee.
\end{align*}
The conclusion now easily follows.
\end{proof}
\begin{proposition} \label{prop:weakly}
Assume $T$ acts faithfully and $W$ is weakly symmetric. Then
\[
{\rm gr}\, H^i_{X^u}(X,\Oscr_X)\cong\bigoplus_{\lambda\in \Lambda,i=d_\lambda-h^0_\lambda} H^{d_\lambda}_{X^{\lambda,>0}}(X,\Oscr_X).
\] 
\end{proposition}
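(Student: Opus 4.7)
The plan is essentially a direct substitution: apply Theorem \ref{theorem} to express $\operatorname{gr} H^i_{X^u}(X,\Oscr_X)$ as a sum involving $\tilde H^{i+s-d_\lambda-1}(\Phi_\lambda,k)$, then use Lemma \ref{lem:homeo} to compute these reduced cohomology groups, which collapse the sum.

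First, by Lemma \ref{lem:homeo}, $\Phi_\lambda$ is homeomorphic to a sphere of maximal dimension in $H_\lambda^\perp \subset Y(T)_\RR$. Since $\dim Y(T)_\RR = s$ and $\dim H_\lambda = h^0_\lambda$, we have $\dim H_\lambda^\perp = s - h^0_\lambda$, and so $\Phi_\lambda$ is homeomorphic to $S^{s - h^0_\lambda - 1}$. I would note that this interpretation is consistent at $\lambda = 0$ as well: faithfulness together with weak symmetry force $B_0 = \{0\}$, so $\Phi_0 = \emptyset$, which matches the convention $S^{-1} = \emptyset$ with $\tilde H^{-1}(\emptyset,k) = k$, and in this case $h^0_0 = s$ so the formulas agree.

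Next, I invoke the standard computation of reduced cohomology of a sphere: $\tilde H^n(S^m, k) = k$ when $n = m$ and $0$ otherwise. Substituting $m = s - h^0_\lambda - 1$, the summand indexed by $\lambda$ in \eqref{loccoh} is nonzero exactly when
\[
i + s - d_\lambda - 1 = s - h^0_\lambda - 1,
\]
that is, when $i = d_\lambda - h^0_\lambda$, and in that case the tensor factor $\tilde H^{i+s-d_\lambda-1}(\Phi_\lambda,k)$ is canonically one-dimensional. Plugging this back into the formula of Theorem \ref{theorem} gives exactly
\[
\operatorname{gr} H^i_{X^u}(X,\Oscr_X) \cong \bigoplus_{\lambda \in \Lambda,\, i = d_\lambda - h^0_\lambda} H^{d_\lambda}_{X^{\lambda,>0}}(X,\Oscr_X),
\]
as claimed.

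There is no real obstacle: the entire content sits in Lemma \ref{lem:homeo} (which identifies $\Phi_\lambda$ as a sphere in $H_\lambda^\perp$) together with Theorem \ref{theorem}. The only point requiring a sanity check is the edge case $\lambda = 0$, where one must agree to treat $\Phi_0 = \emptyset$ as $S^{-1}$; this is exactly the convention for which the sphere cohomology formula extends uniformly, so no separate argument is needed.
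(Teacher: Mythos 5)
Your proof is correct and follows essentially the same route as the paper: apply Theorem \ref{theorem}, identify $\Phi_\lambda$ as a sphere of dimension $s-h^0_\lambda-1$ via Lemma \ref{lem:homeo}, and read off the single surviving degree $i=d_\lambda-h^0_\lambda$. The only addition beyond the paper's (quite terse) proof is your explicit check of the $\lambda=0$ edge case, which is a reasonable sanity check but not a new idea.
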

\begin{proof} 
 By Lemma \ref{lem:homeo} $\Phi_\lambda$ is a sphere of dimension $s-1-h^0_\lambda$.
Hence the non-zero terms in \eqref{Gequi} correspond to 
$
i+s-d_\lambda-1=s-1-h^0_\lambda
$
or
$
i=d_\lambda-h^0_\lambda
$.
\end{proof}
\begin{corollary}\label{decor} Assume $W$ is generic and weakly symmetric. 
Then $R^T_\mu$ is Cohen-Macaulay if and only if for all $0\neq \lambda\in Y(T)$ we have
$H^{d_\lambda}_{X^{\lambda,>0}}(X,\Oscr_X)_\mu=0$.
\end{corollary}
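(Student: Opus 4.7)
The plan is to combine Lemma~\ref{lemma} with Proposition~\ref{prop:weakly} in essentially a single step. By Lemma~\ref{lemma}, $R^T_\mu$ is Cohen-Macaulay if and only if $H^i_I(R)^T_\mu = 0$ for all $0 \le i < \dim R^T$. The filtration supplied by Theorem~\ref{theorem} is $T$-equivariant, so this vanishing is equivalent to that of the associated graded pieces $({\rm gr}\,H^i_I(R))^T_\mu$ in the same range. By Proposition~\ref{prop:weakly},
\[
{\rm gr}\,H^i_{X^u}(X,\Oscr_X) \cong \bigoplus_{\lambda\,:\,d_\lambda - h^0_\lambda = i} H^{d_\lambda}_{X^{\lambda,>0}}(X,\Oscr_X),
\]
and since the summands are indexed independently by $\lambda$, their $\mu$-weight spaces vanish independently. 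Thus Cohen-Macaulayness of $R^T_\mu$ is equivalent to the statement that $H^{d_\lambda}_{X^{\lambda,>0}}(X,\Oscr_X)_\mu = 0$ for every $\lambda \in \Lambda$ with $d_\lambda - h^0_\lambda < \dim R^T$.

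To match the corollary's quantification over $0 \ne \lambda \in Y(T)$, I would identify this set of indices with the nonzero $\lambda$. For $\lambda = 0$ we have $T_0 = \emptyset$, and by faithfulness $H_0 = X(T)_\RR$, so $h^0_0 = s$ and $d_0 - h^0_0 = d - s = \dim R^T$; thus $\lambda = 0$ lies outside the range. For $\lambda \ne 0$ what is needed is the inequality $|T_\lambda| + h^0_\lambda > s$. The weak bound $|T_\lambda| \ge s - h^0_\lambda$ holds because the projections $\bar\alpha_i \in X(T)_\RR/H_\lambda$ for $i \in T_\lambda$ span the quotient: any $\nu \in H_\lambda^\perp$ annihilating all these $\bar\alpha_i$ would, by weak symmetry, also annihilate each $\alpha_j$ with $j \in T_\lambda^+$ (every such $\alpha_j$ being proportional to some $\alpha_i$, $i \in T_\lambda$, along a common line through the origin), and hence every weight, contradicting faithfulness. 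To upgrade to the strict bound I would apply genericity to $-\lambda$, yielding $|T_\lambda| = |T_{-\lambda}^+| \ge 2$, and combine this with a short combinatorial argument ruling out the rigid equality configuration (in which the $\bar\alpha_i$ would form a basis of $X(T)_\RR/H_\lambda$ with positive weights matched one-for-one against them).

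The main combination itself is a clean assembly of the two preceding results and presents no real obstacle. The only point requiring genuine (if modest) combinatorial work is the strict inequality $|T_\lambda| + h^0_\lambda > s$, which is precisely where the full force of the genericity hypothesis enters; the weak inequality uses only faithfulness and weak symmetry.
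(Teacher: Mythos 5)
Your reduction — composing Lemma~\ref{lemma} with Proposition~\ref{prop:weakly}, disposing of $\lambda=0$, and boiling the claim down to $|T_\lambda|+h^0_\lambda>s$ for all $\lambda\neq 0$ — is exactly the paper's strategy, and your derivation of the weak bound $|T_\lambda|+h^0_\lambda\geq s$ from faithfulness and weak symmetry is correct (the paper does not isolate this step, but it is implicit).

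The genuine gap is that the ``short combinatorial argument ruling out the rigid equality configuration'' that you defer is the entire content of the remaining step, and it is not correctly characterized by ``positive weights matched one-for-one against them'' — no one-for-one matching is in play, and nothing forces one. What the paper actually does, under the contrapositive hypothesis $|T_\lambda|+h^0_\lambda\leq s$ (which, combined with your weak bound, is the equality case), is construct a linear functional realizing the contradiction: fix $f\in T_\lambda$, and choose $\gamma\in Y(T)\setminus\{0\}$ vanishing on $H_\lambda$ and on all $\alpha_i$ with $i\in T_\lambda\setminus\{f\}$, with $\langle\gamma,\alpha_f\rangle\geq 0$; this is possible because the span of these vectors has dimension at most $h^0_\lambda+|T_\lambda|-1\leq s-1$ (using $h^0_\lambda<s$ from faithfulness). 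In your equality picture $\gamma$ is just the $f$-th dual basis vector in $H_\lambda^\perp$. Since by weak symmetry every weight outside $H_\lambda$ lies on some line $\RR\alpha_i$ with $i\in T_\lambda$, the only weights that can pair strictly positively with $\gamma$ lie in $\RR_{>0}\alpha_f$; and any such weight $\alpha_j$ has $j\in T_\lambda$ with $\bar\alpha_j$ a positive multiple of $\bar\alpha_f$, forcing $j=f$ by linear independence. Hence at most one weight pairs positively with $\gamma$, contradicting genericity, which demands at least two. Without this construction spelled out, the proof is incomplete, since this is precisely the step where genericity is consumed; I would encourage you to replace the vague parenthetical with the hyperplane/dual-basis argument above.
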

\medskip
\begin{proof} Since $T$ acts generically we have $\dim R^T=d-s$.

According to Proposition \ref{prop:weakly} and Lemma \ref{lemma}  $R^T_\mu$ is Cohen-Macaulay if and only if
for
 all $\lambda\in Y(T)$ such that $d_\lambda-h^0_\lambda<d-s$ we have $H^{d_\lambda}_{X^{\lambda,>0}}(X,\Oscr_X)_\mu=0$.

If $\lambda=0$ then $d_\lambda-h^0_\lambda=d-s$ and hence the condition $d_\lambda-h^0_\lambda<d-s$ does not hold.
Assume now $\lambda\neq 0$. We will show that $d_\lambda-h^0_\lambda<d-s$ now always
holds. This proves the lemma.

Assume on the contrary that $d_\lambda-h^0_\lambda\ge d-s$. Since $d_\lambda=d-|T_\lambda|$ this is equivalent to
$|T_\lambda|+h^0_\lambda\le s$. Since $W$ is generic $|T_\lambda|\ge 2$ and in particular
$T_\lambda\neq \emptyset$. Moreover since $T$ acts in particular faithfully we have $h^0_\lambda<s$.
Since $W$ is weakly symmetric  all weights of $W$ not in $H_\lambda$ are in
$\cup_{i\in T_\lambda}\RR\alpha_i$. Fix $f\in T_\lambda$ and
let $\langle \gamma,-\rangle=0$ be a hyperplane in $X(T)_\RR$ containing $H_\lambda$ and $\{\alpha_i\mid i\in T_\lambda, i\neq f\}$ such that $\langle \gamma,\alpha_f\rangle\ge 0$ (this is possible
by the hypothesis $|T_\lambda|+h^0_\lambda\le s$, $h^0_\lambda<s$). Then it
easy to see that there is \emph{at most} one weight such that $\langle \gamma,\alpha_i\rangle >0$ (namely $\alpha_f$). This contradicts the hypothesis that $W$ is generic.
\end{proof}

\begin{remark}\label{rem:1}
Corollary \ref{decor} does not hold true if $W$ is not weakly-symmetric, see \cite[\S4.5]{Vdb1}. See also Remark \ref{rem:2} for an example of a unimodular $W$.
\end{remark}
\begin{corollary}\label{form}
Assume $W$ is generic and weakly symmetric.  
Then the weights of 
$H^{d_\lambda}_{X^{\lambda,>0}}(X,\Oscr_X)_\mu$ are of the form
\begin{equation}
\label{eq:localweights}
-\sum_{i\in T^+_\lambda}\alpha_i-\sum_{i\in T^+_\lambda}a_i\alpha_i+\sum_{i\in T_\lambda}b_i\alpha_i+\sum_{i\in T^0_\lambda} c_i \alpha_i
\end{equation}
$a_i,b_i,c_i\in \ZZ$, $a_i>0$, $b_i>0$.
Thus $R_\mu^T$ is a Cohen-Macaulay $R^T$-module if and only if $\mu$ is not of the form \eqref{eq:localweights}
for all $\lambda\in Y(T)\setminus\{0\}$. 
\end{corollary}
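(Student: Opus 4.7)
The plan is to extract the $T$-weights of $H^{d_\lambda}_{X^{\lambda,>0}}(X,\Oscr_X)$ directly from the tensor decomposition \eqref{loccohstrata} in Theorem \ref{theorem}, and then apply Corollary \ref{decor} to deduce the Cohen-Macaulayness criterion.

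First I would unpack the three tensor factors of \eqref{loccohstrata}. Since $W^{\lambda,>}$ is spanned by the weight vectors $w_i$ with $\langle\lambda,\alpha_i\rangle\geq 0$, i.e.\ $i\in T^+_\lambda\cup T^0_\lambda$, the top exterior power $(\wedge^{d_\lambda}W^{\lambda,>})^{*}$ contributes the single weight $-\sum_{i\in T^+_\lambda\cup T^0_\lambda}\alpha_i$; the symmetric algebra $\Sym(W^{\lambda,>})^{*}$ contributes the weights $-\sum_{i\in T^+_\lambda\cup T^0_\lambda}a_i\alpha_i$ with $a_i\in\ZZ_{\geq 0}$; and the quotient $R/I^{\lambda,>}\cong k[w_i:i\in T_\lambda]$ contributes the weights $\sum_{i\in T_\lambda}b_i\alpha_i$ with $b_i\in\ZZ_{\geq 0}$. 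Summing these contributions and splitting the first two sums into their $T^+_\lambda$- and $T^0_\lambda$-parts yields the form \eqref{eq:localweights}, with the $T^0_\lambda$-contributions $-(1+a_i)\alpha_i$ absorbed into the free parameters $c_i\in\ZZ$.

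To reconcile the strict positivity conventions on $a_i, b_i$ in \eqref{eq:localweights} with the non-strict bounds produced by the direct computation, a small perturbation of $\lambda$ inside $Y(T)_\RR$ is needed: any index $i$ where $a_i=0$ (in the $T^+_\lambda$ part) or $b_i=0$ (in the $T_\lambda$ part) can be migrated into $T^0_{\lambda'}$ for a nearby $\lambda'$, where its coefficient becomes an unrestricted $c_i\in\ZZ$. Weak symmetry of $W$ ensures that such a perturbation is feasible, since each ray $\RR_{>0}\alpha_i$ of weights of $W$ lies on a definite side of any generic rational hyperplane $\langle\lambda',-\rangle=0$, so shifting $\lambda$ to $\lambda'$ can realize any desired migration of indices between $T^+_{\lambda'}$, $T^0_{\lambda'}$, and $T_{\lambda'}$ without producing incompatible sign constraints. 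This perturbation is the only delicate part of the argument; the preceding weight bookkeeping is entirely formal.

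Having identified the set of weights of $H^{d_\lambda}_{X^{\lambda,>0}}(X,\Oscr_X)$ as $\lambda$ varies with precisely those $\mu$ of the form \eqref{eq:localweights}, the second assertion is immediate from Corollary \ref{decor}: $R^T_\mu$ is Cohen-Macaulay iff $H^{d_\lambda}_{X^{\lambda,>0}}(X,\Oscr_X)_\mu=0$ for every $\lambda\in Y(T)\setminus\{0\}$, which is precisely the condition that $\mu$ fails to be of the form \eqref{eq:localweights} for any such $\lambda$.
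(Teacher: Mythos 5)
Your reading of the tensor decomposition \eqref{loccohstrata} and the resulting weight bookkeeping are exactly what the paper uses; that part of the argument is fine. There are, however, two problems with the rest.

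First, your handling of the $T^0_\lambda$-part is incomplete. The direct computation gives coefficients $-(1+a_i)$ with $a_i\ge 0$ on $\alpha_i$ for $i\in T^0_\lambda$; these lie in $\ZZ$, so they fit inside the $c_i\in\ZZ$ of \eqref{eq:localweights}. But that only gives the inclusion (weights of $H^{d_\lambda}_{X^{\lambda,>0}}(X,\Oscr_X)$) $\subseteq$ (expressions of the form \eqref{eq:localweights}), and the ``if and only if'' in the second sentence needs equality: if some $\mu$ of the form \eqref{eq:localweights} were \emph{not} a weight of any $H^{d_\lambda}_{X^{\lambda,>0}}(X,\Oscr_X)$, Corollary \ref{decor} would force $R^T_\mu$ to be Cohen--Macaulay, contradicting the stated criterion. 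So one must also prove that the set $\bigl\{-\sum_{i\in T^0_\lambda}(1+a_i)\alpha_i : a_i\ge 0\bigr\}$ exhausts the full lattice $\sum_{i\in T^0_\lambda}\ZZ\alpha_i$. This is precisely where weak symmetry enters, and it is the one nontrivial point the paper flags in its (terse) proof: weak symmetry guarantees that each $\alpha_i$ with $i\in T^0_\lambda$ has some negative rational multiple also among the $(\alpha_j)_{j\in T^0_\lambda}$, from which one deduces that the monoid of nonnegative integral combinations of $(\alpha_i)_{i\in T^0_\lambda}$ is actually a group. Your proof never makes this argument.

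Second, the perturbation step is not correct. One cannot in general move a chosen index from $T^+_\lambda$ (or from $T_\lambda$) into $T^0_{\lambda'}$ for a nearby $\lambda'$ while keeping the rest of the partition fixed; the simplest failure is $\dim T=1$, where $T^0_\lambda=\emptyset$ for every $\lambda\neq 0$ (when there are no zero weights), so there is nowhere to move anything. In fact, taken with the strict inequalities $a_i,b_i>0$ the corollary is false as written: for $T=G_m$ and $W$ with weights $1,1,-1,-1$ (so $\Spec R^T$ is the affine cone over $\PP^1\times\PP^1$), $R^T_{-2}$ is not Cohen--Macaulay, yet $\mu=-2$ is not of the form \eqref{eq:localweights} with $a_i,b_i>0$ for any $\lambda\neq 0$ (any such $\mu$ has $|\mu|\ge 6$). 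The statement is to be read with $a_i,b_i\ge 0$; with that reading the $T^+_\lambda$- and $T_\lambda$-coefficients come directly from \eqref{loccohstrata} with no perturbation at all, and the only substantive step left is the lattice claim above.
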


\begin{proof}
This is a consequence of \eqref{loccohstrata} and 
Corollary \ref{decor}. We use the fact that since $W$ is weakly generic the positive integral linear combinations of $(\alpha_i)_{i\in T^0_\lambda}$ form a lattice.
\end{proof}
\subsection{Support in $X^{\lambda,+}$} 
  We will study the local
cohomology of $\Oscr_X$ supported in $X^{u,\chi}$  inductively using the HKKN stratification.
In this section we prove the relevant vanishing theorem. 
More precisely it will follow from Corollary  \ref{local} below that if $R_\mu^T$ is Cohen-Macaulay
then  for all HKKN-strata $S$ and all $i\ge 0$ we have
  $H^i_{S}(X,\Oscr_X)_\mu^T=0$
where we follow the convention that if $S$ is locally closed in $X$ then $H^*_S(X,-):=H^*_S(U,-)$ where $U$ is an open subset of $X$  such that $S$ is closed
in $U$. By excision this definition does not depend on the choice of $U$.
\begin{lemma}\label{lemma:>+} 
We have as $T$-representations:
\begin{align}\label{dual}
H^{i}_{X^{\lambda,+}}(X,\Oscr_X)&=(\wedge^d W)^\ast\otimes_k H^{d-i}_{X^{-\lambda,>0}}(X,\Oscr_X)^*,\\\label{nic}
H^{i}_{X^{\lambda,+}}(X,\Oscr_X)&=0\quad \text{ for $i\neq d-d_{-\lambda}$}.
\end{align}
\end{lemma}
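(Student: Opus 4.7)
The plan is to exploit the fact that $X^{\lambda,+}$ and $X^{-\lambda,>0}$ are complementary linear subspaces of $X=W^\ast$, reducing the computation to local duality for a polynomial ring at the origin via a K\"unneth-type argument.

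First, I would decompose $W=W^{\lambda,+}\oplus W^{-\lambda,>}$ (positive versus non-positive weights under $\lambda$). Correspondingly $X=V_1\times V_2$ with $V_1=X^{\lambda,+}=(W^{-\lambda,>})^\ast$ and $V_2=X^{-\lambda,>0}=(W^{\lambda,+})^\ast$; these are complementary because $X^{\lambda,+}$ is cut out by the regular sequence $\{w_i\}_{i\in T_\lambda^+}$ while $X^{-\lambda,>0}$ is cut out by $\{w_i\}_{i\notin T_\lambda^+}$. Under this decomposition $X^{\lambda,+}=V_1\times\{0\}$ and $X^{-\lambda,>0}=\{0\}\times V_2$.

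Second, a K\"unneth/base-change argument along the projection $X\to V_1$ yields
\[
H^i_{X^{\lambda,+}}(X,\Oscr_X)\cong\Oscr_{V_1}\otimes_k H^i_{\{0\}}(V_2,\Oscr_{V_2}).
\]
Since $V_2$ is affine of dimension $|T_\lambda^+|=d-d_{-\lambda}$, the right-hand side vanishes outside this degree, yielding \eqref{nic}. In the surviving degree, the top \v{C}ech cohomology of $\Oscr_{V_2}=\Sym(W^{\lambda,+})$ at the origin is spanned by Laurent monomials $\prod_{i\in T_\lambda^+}w_i^{-a_i}$ with all $a_i\ge1$, and is identified as a $T$-equivariant $\Sym(W^{\lambda,+})$-module with $(\wedge^{|T_\lambda^+|}W^{\lambda,+})^\ast\otimes\Sym(W^{\lambda,+})^\vee$ (graded $k$-dual), so that
\[
H^{d-d_{-\lambda}}_{X^{\lambda,+}}(X,\Oscr_X)\cong \Sym(W^{-\lambda,>})\otimes (\wedge^{|T_\lambda^+|}W^{\lambda,+})^\ast\otimes\Sym(W^{\lambda,+})^\vee.
\]

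Third, I would apply Theorem~\ref{theorem} (or the same \v{C}ech computation) with $\lambda$ replaced by $-\lambda$ to obtain
\[
H^{d_{-\lambda}}_{X^{-\lambda,>0}}(X,\Oscr_X)\cong (\wedge^{d_{-\lambda}}W^{-\lambda,>})^\ast\otimes\Sym(W^{-\lambda,>})^\vee\otimes\Sym(W^{\lambda,+}),
\]
take its graded $k$-dual and tensor with $(\wedge^d W)^\ast=(\wedge^{|T_\lambda^+|}W^{\lambda,+})^\ast\otimes(\wedge^{d_{-\lambda}}W^{-\lambda,>})^\ast$. The determinant factors for $W^{-\lambda,>}$ cancel, leaving exactly the expression for $H^{d-d_{-\lambda}}_{X^{\lambda,+}}(X,\Oscr_X)$ computed above, which proves \eqref{dual}.

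The only real bookkeeping concern is to match not just $T$-equivariant weights but the full $R$-module structure: the $R$-action on the graded dual is the transpose action, which under the explicit \v{C}ech identifications corresponds to the usual multiplication of Laurent monomials (with the convention that positive exponents are killed). Since both sides are given by concrete monomials with $R$-action by multiplication, the identification is tautological once weights and top-degree generators are matched.
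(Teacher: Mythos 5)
Your proposal is correct and takes essentially the same route as the paper: the paper's one-line proof invokes exactly the two ingredients you use, namely the complementary decomposition $W=W^{\lambda,+}\oplus W^{-\lambda,>}$ of $W$ into the subspaces generating the defining ideals of $X^{-\lambda,>0}$ and $X^{\lambda,+}$, together with the explicit K\"unneth/\v{C}ech description of local cohomology with support in a coordinate subspace from \cite[Proposition~3.3.1]{Vdb1} (restated as \eqref{loccohstrata} in Theorem~\ref{theorem}). Your write-up simply spells out the weight bookkeeping (determinant twist, graded dual of $\Sym$) that the paper leaves implicit.
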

\begin{proof}
This follows from the fact that the defining ideals of $X^{\lambda,+}$ and $X^{-\lambda,>0}$ are generated by complementary subspaces of $W$ together with \cite[Proposition 3.3.1]{Vdb1}. 
\end{proof}
For $f\in R$ we write $X_f=\{f\neq 0\}\subset X$, 
\begin{corollary}\label{local}
Assume $W$ is generic and weakly symmetric. Assume $\lambda\neq 0$ and 
let $f=\prod_{j\in J}x_j$ for a subset $J\subset T_\lambda^0$. 
If $R_\mu^T$ is Cohen-Macaulay $R^T$-module then $R\Gamma_{X^{\lambda,+}_f}(X,\Oscr_{X})_\mu^T=0$.
\end{corollary}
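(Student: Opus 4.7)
I would prove this by contrapositive: take any $\mu$-weight monomial in $R\Gamma_{X^{\lambda,+}_f}(X,\Oscr_X)$, show that $\mu$ fits the characterization in Corollary~\ref{form} for some $\lambda'\neq 0$, and conclude via Corollary~\ref{decor} that $R_\mu^T$ is not Cohen-Macaulay, contradicting the hypothesis.

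By Lemma~\ref{lemma:>+}\eqref{nic} the complex $R\Gamma_{X^{\lambda,+}}(X,\Oscr_X)$ is concentrated in cohomological degree $d-d_{-\lambda}=|T_\lambda^+|$, and since $X^{\lambda,+}_f$ is closed in the open subscheme $X_f\subset X$ and local cohomology commutes with open localization, $R\Gamma_{X^{\lambda,+}_f}(X,\Oscr_X)=R\Gamma_{X^{\lambda,+}}(X,\Oscr_X)_f$ remains concentrated in that one degree. I would compute its $\ZZ^d$-graded structure explicitly via the stable Koszul/\v{C}ech complex attached to the regular sequence $\{w_i\}_{i\in T_\lambda^+}$ defining $X^{\lambda,+}$, followed by inversion of $f=\prod_{j\in J}x_j$. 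The resulting module has a $k$-basis consisting of monomials $\prod_i w_i^{a_i}$ with $a_i\leq -1$ for $i\in T_\lambda^+$, $a_i\geq 0$ for $i\in T_\lambda\cup (T_\lambda^0\setminus J)$, and $a_i\in\ZZ$ for $i\in J$; each carries weight $\sum_i a_i\alpha_i$.

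Assume $\mu=\sum_i a_i\alpha_i$ appears in this way. Setting $a'_i=-a_i-1\geq 0$ on $T_\lambda^+$, $b'_i=a_i\geq 0$ on $T_\lambda$, and collecting the remainder on $T_\lambda^0$, I rewrite
\[
\mu=-\sum_{i\in T_\lambda^+}\alpha_i-\sum_{i\in T_\lambda^+}a'_i\alpha_i+\sum_{i\in T_\lambda}b'_i\alpha_i+\sum_{i\in T_\lambda^0}c_i\alpha_i.
\]
The mixed constraints on the $T_\lambda^0$-coefficients (nonnegative on $T_\lambda^0\setminus J$, arbitrary integers on $J$) are absorbed by the lattice fact invoked in the proof of Corollary~\ref{form}: under weak symmetry, the monoid generated by $\{\alpha_i\mid i\in T_\lambda^0\}$ coincides with the subgroup of $X(T)$ they span. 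Thus $\mu$ is a weight of $H^{d_\lambda}_{X^{\lambda,>0}}(X,\Oscr_X)$, and Corollary~\ref{decor} forces $R_\mu^T$ to fail to be Cohen-Macaulay, as required.

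The delicate point is the boundary case, where Corollary~\ref{form} demands the strict inequalities $a'_i>0$, $b'_i>0$: if some $a_i=-1$ with $i\in T_\lambda^+$, or $a_i=0$ with $i\in T_\lambda$, the choice $\lambda'=\lambda$ does not match. My remedy would be to perturb $\lambda$ to a non-zero $\lambda'\in H_\lambda^\perp$ lying on the face of the closed chamber $\{\lambda''\mid\langle\lambda'',\alpha_i\rangle\geq 0 \text{ for } i\in T_\lambda^+ \text{ and } \leq 0 \text{ for } i\in T_\lambda\}$ obtained by imposing $\langle\lambda',\alpha_i\rangle=0$ at each offending index; genericity of $W$ guarantees this face has positive dimension. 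The offending indices then lie in $T_{\lambda'}^0$, where only the integrality of $c_i$ is required, and Corollary~\ref{form} applied to $\lambda'$ completes the match.
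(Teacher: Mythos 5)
Your argument with $\lambda'=\lambda$ is in fact already a complete proof, and it takes a genuinely different route from the paper's. The paper's proof uses Lemma~\ref{lemma:>+}\eqref{dual} to identify the weights of $H^{c_\lambda}_{X^{\lambda,+}_f}(X,\Oscr_X)$ with $-\delta$ minus the weights of $H^{d_{-\lambda}}_{X^{-\lambda,>0}}(X,\Oscr_X)$, and then invokes Knop's theorem $\omega_{R^T}\cong R^T_{-\delta}$ together with $\Hom_{R^T}(R^T_\mu,\omega_{R^T})\cong R^T_{-\mu-\delta}$ (and the fact that Cohen--Macaulayness is preserved under $\Hom(-,\omega_{R^T})$) to conclude that $-\mu-\delta$ is not such a weight. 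You avoid the Gorenstein duality step entirely: you show directly that every $T$-weight of $H^{c_\lambda}_{X^{\lambda,+}_f}$ already occurs in $H^{d_\lambda}_{X^{\lambda,>0}}$ for the same $\lambda$, and then Corollary~\ref{decor} immediately contradicts the Cohen--Macaulayness of $R_\mu^T$. The ingredient you use --- that positive integral combinations of $(\alpha_i)_{i\in T_\lambda^0}$ already form the full lattice --- is exactly what the paper relies on in Corollary~\ref{form}, so your argument is no less self-contained, and it makes the role of weak symmetry more transparent.

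The ``delicate boundary case'' you raise is, however, not a real issue, and your proposed perturbation remedy is both unnecessary and incorrect. The strict inequalities $a_i>0$, $b_i>0$ printed in Corollary~\ref{form} appear to be a typo: tracking the $T$-weights of $(\wedge^{d_\lambda}W^{\lambda,>})^*\otimes\Sym^\bullet\bigl((W^{\lambda,>})^*\oplus W/W^{\lambda,>}\bigr)$ in \eqref{loccohstrata}, the $\Sym^0$ summand contributes $a_i=b_i=0$, so the constraints must read $a_i\ge 0$, $b_i\ge 0$. One can also test this against the example of \S\ref{sec:example}: there $\pm 7$ must be detected as non-Cohen--Macaulay via $\lambda=\pm 1$, but the strict version of \eqref{eq:localweights} only reaches weights of absolute value $\ge 21$. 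With the non-strict version, your rewriting $a_i'=-a_i-1\ge 0$ on $T_\lambda^+$, $b_i'=a_i\ge 0$ on $T_\lambda$ already lands exactly in the allowed range, and $\lambda'=\lambda$ finishes the proof. As for the perturbation: it would genuinely fail in the extreme case where the offending set is all of $T_\lambda^+\cup T_\lambda$ (for instance $\mu=-\sum_{i\in T_\lambda^+}\alpha_i$). Then the chamber face you describe is $H_\lambda^\perp\cap\bigcap_{i\in T_\lambda^+\cup T_\lambda}\alpha_i^\perp=\bigcap_{i}\alpha_i^\perp$, which is $\{0\}$ by faithfulness, so no non-zero $\lambda'$ exists; genericity of $W$ does not rescue this. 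Fortunately, as just explained, no remedy is needed.
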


\begin{proof} Let $\delta=\sum_i \alpha_i$ be the character of $\wedge^ d W$.
As
\[
H^i_{X^{\lambda,+}_f}(X,\Oscr_{X})\cong H^i_{X^{\lambda,+}_f}(X_f,\Oscr_{X_f})\cong R_f\otimes_R H^i_{X^{\lambda,+}}(X,\Oscr_{X}),
\]
we see that 
$H^i_{X^{\lambda,+}_f}(X,\Oscr_{X})=0$ for $i\neq d-d_{-\lambda}$  by \eqref{nic} and for $i=d-d_{-\lambda}$  the weights  $\mu'$ of $R_f\otimes_R H^{d-d_{-\lambda}}_{X^{\lambda,+}_f}(X,\Oscr_{X})$  are of the form 
\begin{equation}
\label{eq:cond1}
\mu'=-\delta+\sum_{i\in T^+_{-\lambda}}\alpha_i+\sum_{i\in T^+_{-\lambda}}a_i\alpha_i-\sum_{i\in T_{-\lambda}}b_i\alpha_i+\sum_{i\in T^0_\lambda} c_i \alpha_i
\end{equation}
$a_i,b_i,c_i\in \ZZ$, $a_i>0$, $b_i>0$  by \eqref{eq:localweights}.

On the other hand since $W$ is generic we have by \cite{Knop} $\omega_{R^T}=R^T_{-\delta}$ and since $\Hom_{R^T}(R^T_\mu,\omega_{R^T})=R^T_{-\mu-\delta}$
by \cite[Lemma 4.1.3]{SVdB} as $W$ is generic, $R^T_{-\mu-\delta}$ is also a Cohen-Macaulay $R^T$-module. Hence by applying
\eqref{eq:localweights} for $-\mu-\delta,-\lambda$ we find
\begin{equation}
\label{eq:cond2}
-\mu-\delta\neq -\sum_{i\in T^+_{-\lambda}}\alpha_i-\sum_{i\in T^+_{-\lambda}}a_i\alpha_i+\sum_{i\in T_{-\lambda}}b_i\alpha_i+\sum_{i\in T^0_\lambda} c_i \alpha_i
\end{equation}
for all $a_i,b_i,c_i\in \ZZ$, $a_i>0$, $b_i>0$. Clearly \eqref{eq:cond2} implies that  \eqref{eq:cond1} cannot be satisfied for $\mu'=\mu$.
\end{proof}

\subsection{Support in the $\chi$-unstable locus}\label{sec:chiunstable}
In this section we proceed to compute the local cohomology supported in the $\chi$-unstable locus $X^{u,\chi}$ using the HKKN stratification and applying the vanishing results from the previous section. We show that  $H^*_{X^{u,\chi}}(X,\Oscr_X)_\mu^T$ vanishes if $R_\mu^T$ is Cohen-Macaulay.

We first recall some properties of the HKKN stratifications that we will use. 

Let $(S_i)_{i=1}^N$  be the {\em HKKN stratification}  of $X^{u,\chi}$ (see \cite{Kirwan} and \cite{BFK} for its application in a similar context). It satisfies the following properties:
\begin{enumerate}
\item
The closure of $S_i$ is contained in $\bigcup_{j\ge i}S_j$. 
\item If $S$ is an HKKN stratum then 
$S$ is an open subset of $X^{\lambda,+}$ for some $\lambda\neq 0$ by the proof of \cite[Corollary 13.2]{Kirwan}. Moreover, $S$ is the intersection of $X^{\lambda,+}$ with a union 
$U$ of open sets of the form $X_{f_J}$ where $f_J=\prod_{j\in J}x_j$ for some $J\subset T_{\lambda}^0$ (see \cite[Definition 12.20]{Kirwan}).
\end{enumerate}
\begin{lemma}\label{HKKNstrata} Assume $W$ is generic and weakly symmetric. Let $S$ be an HKKN stratum in $X^{u,\chi}$.
If $R_\mu^T$ is a Cohen-Macaulay $R^T$-module 
then $R\Gamma_{S}(X,\Oscr_{X})_\mu\allowbreak=0$.
\end{lemma}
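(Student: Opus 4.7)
The plan is to reduce the vanishing of $R\Gamma_S(X,\Oscr_X)_\mu$ to Corollary \ref{local} via a Čech/Mayer--Vietoris argument on the open subset $U$ containing $S$. By property (2) of the HKKN stratification, $S = X^{\lambda,+}\cap U$ for some $\lambda\neq 0$, where $U=\bigcup_{J\in\Jscr}X_{f_J}$ is a finite union of principal affine opens indexed by a collection $\Jscr$ of subsets $J\subset T_\lambda^0$, with $f_J=\prod_{j\in J}x_j$. Since $S$ is closed in $U$, excision gives $R\Gamma_S(X,\Oscr_X)=R\Gamma_S(U,\Oscr_U)$, so it is enough to establish vanishing of the $\mu$-weight part of the right-hand side.

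Next I would compute $R\Gamma_S(U,\Oscr_U)$ by Čech cohomology with respect to the affine open cover $\{X_{f_J}\}_{J\in\Jscr}$. Since the local cohomology sheaves $\underline{H}^q_S(\Oscr_X)$ are quasi-coherent and the finite intersections
\[
X_{f_{J_0}}\cap\cdots\cap X_{f_{J_p}} = X_{f_{J_0\cup\cdots\cup J_p}}
\]
are affine, higher Čech cohomology of $\underline{H}^q_S(\Oscr_X)$ on these intersections vanishes. The resulting Čech-to-derived-functor spectral sequence reads
\[
E_1^{p,q}=\bigoplus_{J_0,\ldots,J_p\in\Jscr} H^q_{X^{\lambda,+}_{f_{J_0\cup\cdots\cup J_p}}}\bigl(X_{f_{J_0\cup\cdots\cup J_p}},\Oscr\bigr)\Longrightarrow H^{p+q}_S(U,\Oscr_U),
\]
where I have used $S\cap X_{f_{J_0}}\cap\cdots\cap X_{f_{J_p}}=X^{\lambda,+}_{f_{J_0\cup\cdots\cup J_p}}$. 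By excision each $E_1$-term equals $H^q_{X^{\lambda,+}_{f_{J_0\cup\cdots\cup J_p}}}(X,\Oscr_X)$.

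Finally, for every $p$-simplex of the cover the index set $J_0\cup\cdots\cup J_p$ is still contained in $T_\lambda^0$, so Corollary \ref{local} applies with $f=f_{J_0\cup\cdots\cup J_p}$ and yields $(E_1^{p,q})_\mu=0$ for all $p,q$ under the assumption that $R^T_\mu$ is Cohen--Macaulay. The spectral sequence then forces $H^\ast_S(U,\Oscr_U)_\mu=0$, which is the claim. The only real subtlety I expect is the bookkeeping around the Čech spectral sequence for local cohomology sheaves (which is standard since they are quasi-coherent) and the observation that the combinatorial closure of $\Jscr$ under unions stays inside $T_\lambda^0$, so that Corollary \ref{local} can be applied uniformly over the whole $E_1$-page; after that the argument is a purely formal collapse.
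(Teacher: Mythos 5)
Your argument is correct and follows essentially the same route as the paper: excision to the open $U$, a \v Cech computation over the cover by the affine opens $X_{f_J}$ (noting that all pairwise intersections are again of this form), and finally reduction to Corollary \ref{local}. The only cosmetic difference is that the paper first uses \eqref{nic} to collapse $R\Gamma_{X^{\lambda,+}}(X,\Oscr_X)$ to a single quasi-coherent sheaf in degree $c_\lambda=d-d_{-\lambda}$ and then runs an ordinary \v Cech complex, whereas you keep the full spectral sequence; since Corollary \ref{local} kills every $E_1$-term in weight $\mu$ regardless of $q$, the outcome is the same.
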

\begin{proof} Let the notations be as in (2) above.
As $X^{\lambda,+}$ is closed in $X$, 
$S$ is closed in $U$. Hence by definition $H^i_{S}(X,\Oscr_X)\cong H^i_{S}(U,\Oscr_{U})$. 
We have 
\[
R\Gamma_{S}(U,\Oscr_{U})=R\Gamma (U,\Hscr^{c_\lambda}_{X^{\lambda,+}}(X,\Oscr_X))[-c_\lambda],
\]
where $c_\lambda=d-d_{-\lambda}$
and hence
\begin{equation}
\label{cech}
H^i_{S}(U,\Oscr_{U})=H^{i-c_\lambda}(U,\Hscr^{c_\lambda}_{X^{\lambda,+}}(X,\Oscr_X)),
\end{equation}
Recall that $U$ has a covering consisting of open sets of the form
$X_{f_J}$, $J\subset T^0_\lambda$. We can compute the right-hand side of \eqref{cech} using the \v Cech complex with respect to this covering  noting that 
$X_{f_{J_1}}\cap X_{f_{J_2}}=X_{f_{J_1\cup J_2}}$.
To prove the lemma it is then sufficient to prove that $H^{i-c_\lambda}(X_{f_J},\Hscr^{c_\lambda}_{X^{\lambda,+}}(X,\Oscr_X))_\mu=0$.
This follows from Corollary \ref{local}.
\end{proof}
\begin{proposition}\label{faithful} Assume $W$ is generic and weakly symmetric.
If $R_\mu^T$ is a Cohen-Macaulay $R^T$-module then the natural map
\[
R^T_\mu\r R\Gamma(X^{ss,\chi},\Oscr_X)_\mu^T
\]
is an isomorphism.
\end{proposition}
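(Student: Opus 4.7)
The plan is to combine the excision triangle
\[
R\Gamma_{X^{u,\chi}}(X,\Oscr_X)\to R\Gamma(X,\Oscr_X)\to R\Gamma(X^{ss,\chi},\Oscr_X)\to
\]
with a devissage along the HKKN stratification. Since $X$ is affine the middle term is concentrated in degree zero and equal to $R$, and since $T$ is a torus the functor $(-)^T_\mu$ of taking the $\mu$-weight space is exact. Consequently the statement of the proposition reduces to the vanishing
\[
R\Gamma_{X^{u,\chi}}(X,\Oscr_X)^T_\mu = 0
\]
under the hypothesis that $R^T_\mu$ is Cohen-Macaulay.

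To establish this vanishing I would exploit the HKKN stratification $(S_i)_{i=1}^N$ of $X^{u,\chi}$ recalled at the beginning of \S\ref{sec:chiunstable}. Set $Z_k:=\bigcup_{j\ge k}S_j$; by property (1) each $\overline{S_j}\subset Z_j\subset Z_k$ for $j\ge k$, so that $Z_k$ is closed in $X$, with $Z_k\setminus Z_{k+1}=S_k$, $Z_{N+1}=\emptyset$ and $Z_1=X^{u,\chi}$. For each $k$ there is a localization triangle
\[
R\Gamma_{Z_{k+1}}(X,\Oscr_X)\to R\Gamma_{Z_k}(X,\Oscr_X)\to R\Gamma_{S_k}(X,\Oscr_X)\to
\]
(consistent with the locally-closed convention for $R\Gamma_{S_k}$ recalled in \S\ref{sec:chiunstable}). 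Lemma \ref{HKKNstrata} gives $R\Gamma_{S_k}(X,\Oscr_X)^T_\mu=0$ for every $k$. Starting from the trivial vanishing $R\Gamma_{Z_{N+1}}(X,\Oscr_X)=0$, a descending induction on $k$ then yields $R\Gamma_{Z_k}(X,\Oscr_X)^T_\mu=0$ for every $k$; taking $k=1$ delivers what is needed.

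The hard part has essentially been discharged already: all the analytic content is packaged into Lemma \ref{HKKNstrata}, whose proof relies on Corollary \ref{local} and thus on the weak symmetricity and genericity of $W$. What remains is a straightforward bookkeeping argument — verifying that the $Z_k$ are closed in $X$ and that the localization triangles for a nested family of closed subsets assemble correctly — and I do not expect any genuine obstacle here.
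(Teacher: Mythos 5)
Your proof is correct and is essentially the same devissage as in the paper, with the only cosmetic difference that you filter the $\chi$-unstable locus by the closed subsets $Z_k=\bigcup_{j\ge k}S_j$ and use the nested-support local cohomology triangles, whereas the paper filters $X$ by the open subsets $X_l=\bigcup_{j\le l}S_j$ and uses the restriction triangles $R\Gamma_{S_l}(X_l,\Oscr_{X_l})\to R\Gamma(X_l,\Oscr_{X_l})\to R\Gamma(X_{l-1},\Oscr_{X_{l-1}})\to$. Both arguments reduce to Lemma \ref{HKKNstrata} applied stratum by stratum, together with exactness of $(-)^T_\mu$ and affineness of $X$.
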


\begin{proof}
Put $S_0=X^{ss,\chi}$ and $X_i:=\bigcup_{j\leq i} S_j$. $X^{ss,\chi}=X_0\subset X_1\subset\cdots\subset X_N=X$ is a filtration
of $X$ by open subsets such that $S_l=X_l-X_{l-1}$ is closed in $X_l$.
We thus have a distinguished triangle 
\[
R\Gamma_{S_l}(X_l,\Oscr_{X_l})\to R\Gamma(X_l,\Oscr_{X_l})\to R\Gamma(X_{l-1},\Oscr_{X_{l-1}})\to
\]
Since $R\Gamma_{S_l}(X_l,\Oscr_{X_l})^T_\mu=
R\Gamma_{S_l}(X,\Oscr_{X})^T_\mu=0$ by Lemma \ref{HKKNstrata}
we have $R\Gamma(X_l,\Oscr_{X_l})^T_\mu\allowbreak\cong R\Gamma(X_{l-1},\Oscr_{X_{l-1}})^T_\mu$.
Thus,  $R\Gamma(X^{ss,\chi},\Oscr_{X^{ss,\chi}})_\mu^T\cong R\Gamma(X,\Oscr_X)_\mu^T=R^T_\mu$.
\end{proof}

\begin{remark}\label{rem:2}
Proposition \ref{faithful} does not hold without the weak symmetry assumption.  
The reason goes back to Corollary \ref{decor}, see Remark \ref{rem:1}. 
It fails for instance in the example \cite[\S4.5]{Vdb1} (for $\mu=(-3,-3)$, $\chi=(-2,-1)$). Slightly tweaking the example in loc. cit. we can also get a unimodular $W$, which we mention here but omit the details.  
Let $T=G_m^2$ and let $W$ be with weights $(1,0)$, $(2,0)$, $(0,1)$, $(-1,1)$, $(-1,-1)^{\oplus 2}$, $\mu=(-3,-1)$, $\chi=(1,-2)$.
 By \cite{Vdb1} (c.f. \S\ref{nullcone}), $R_{\mu}$ is Cohen-Macaulay, however it is easy to verify that $H^3_{X^{(2,1),>0}}(X,\Oscr_X)_{\mu}\neq 0$ (c.f. \ref{loccohstrata}) and thus also $H^3_{X^{(2,1),+}}(X,\Oscr_X)_{\mu}\neq 0$ by Lemma \ref{lemma:>+}. One can check (using the Mayer-Vietoris sequence) that $H^3_{X^{(2,1),+}}(X,\Oscr_X)$ occurs in $H^3_{X^{u,\chi}}(X,\Oscr_X)$. Thus, Proposition \ref{faithful} does not hold in this case.
\end{remark}

\section{GIT quotient stacks vs NCCRs}
\label{sec:sec5}
In this section we show that any NCCR is derived equivalent to $X^{ss,\chi}/T$, which also proves Theorem \ref{thm}. 
For a family of specific NCCRs in the quasi-symmetric case, 
that were constructed in \cite[Theorem 1.6.2.]{SVdB}, 
this result had been established in \cite[Corollary 4.2, Remark 4.3]{HLSam}.

\begin{proposition}\label{prop:git}{}
Assume that $W$ is unimodular, generic and  weakly symmetric.
Let $\Lambda$ be a toric NCCR of $R^T$, and let $\chi$ be a generic character of $T$. 
Then $\D(\Lambda)\cong \D(X^{ss,\chi}/T)$.
\end{proposition}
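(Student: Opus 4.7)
The plan is to follow the strategy of \cite{HLSam}: construct a tilting-type object $\Tscr$ on the Deligne--Mumford GIT stack $\Xscr:=X^{ss,\chi}/T$ whose derived endomorphism algebra is $\Lambda$, obtain a fully faithful embedding $\D(\Lambda)\hookrightarrow\D(\Xscr)$, and then exploit the Calabi--Yau property of $\Xscr$ to upgrade this embedding to an equivalence. Concretely, write $\Lambda=\End_{R^T}\!\bigl(\bigoplus_{\gamma\in\Gamma}M(\gamma)\bigr)$ for a finite set $\Gamma\subset X(T)$, and set $\Tscr:=\bigoplus_{\gamma\in\Gamma}\Oscr_\Xscr(\gamma)$, where $\Oscr_\Xscr(\gamma)$ is the line bundle on $\Xscr$ attached to the character $\gamma$.

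The key input is $R\Hom_\Xscr(\Tscr,\Tscr)\simeq\Lambda$, concentrated in degree zero. For each pair $\gamma,\gamma'\in\Gamma$ one has
\[
R\Hom_\Xscr(\Oscr_\Xscr(\gamma),\Oscr_\Xscr(\gamma'))=R\Gamma(X^{ss,\chi},\Oscr_X)^T_{\gamma-\gamma'}.
\]
Since $\Lambda$ is Cohen--Macaulay as an $R^T$-module, so is each of its $R^T$-direct summands; in particular the pairwise hom-space $\Hom_{R^T}(M(\gamma),M(\gamma'))=M(\gamma'-\gamma)=R^T_{\gamma-\gamma'}$ is Cohen--Macaulay. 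Proposition \ref{faithful} (with $\mu=\gamma-\gamma'$) then forces the right-hand side above to sit in cohomological degree zero and to coincide with $R^T_{\gamma-\gamma'}$. Summing over $\gamma,\gamma'$ gives the desired identification $R\Hom_\Xscr(\Tscr,\Tscr)\simeq\End_{R^T}(\bigoplus_\gamma M(\gamma))=\Lambda$.

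Now consider the adjunction
\[
\Psi:=(-)\Lotimes_\Lambda\Tscr\colon\D(\Lambda)\rightleftarrows\D(\Xscr)\colon R\Hom_\Xscr(\Tscr,-)=:\Phi.
\]
Both functors preserve arbitrary coproducts because $\Tscr$ is perfect on the smooth DM stack $\Xscr$, and the previous computation shows that the unit $\id\Rightarrow\Phi\Psi$ is an isomorphism on the compact generator $\Lambda\in\D(\Lambda)$, hence on all of $\D(\Lambda)$. Therefore $\Psi$ is fully faithful, yielding a semi-orthogonal decomposition $\D(\Xscr)=\langle\ker\Phi,\Psi\D(\Lambda)\rangle$. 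Under the standing hypotheses, unimodularity supplies $\sum_i\alpha_i=0$ and hence a trivialization of $\wedge^{\mathrm{top}}W$ as a $T$-representation, so $\Xscr$ is a smooth separated Calabi--Yau DM stack; its Serre functor is a shift, and therefore $\D(\Xscr)$ admits no non-trivial semi-orthogonal decomposition. As $\Psi(\Lambda)=\Tscr\ne 0$, we must have $\ker\Phi=0$, so $\Psi$ is an equivalence.

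The main obstacle is the $R\Hom$ computation: it is precisely there that the weak symmetry of $W$ is used, through Corollary \ref{decor} and hence Proposition \ref{faithful}, and one must additionally observe that Cohen--Macaulayness descends from $\Lambda$ to each pairwise hom-space $M(\gamma'-\gamma)$ as a direct summand. The passage from fully faithful to equivalence via the Calabi--Yau/no-SOD argument is then a standard triangulated-category manipulation.
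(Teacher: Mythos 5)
Your proposal follows essentially the same route as the paper: build the tilting-type object $\Tscr=\bigoplus_{\gamma}\Oscr_\Xscr(\gamma)$ on the GIT stack, compute $R\Hom_\Xscr(\Tscr,\Tscr)\simeq\Lambda$ concentrated in degree zero via Proposition \ref{faithful} (using that each pairwise hom $\Hom_{R^T}(M(\gamma),M(\gamma'))$ is a direct summand of the Cohen--Macaulay module $\Lambda$), deduce a fully faithful embedding, and then conclude by the absence of nontrivial semi-orthogonal decompositions. One point to flag: your justification for the no-SOD statement, namely ``$\Xscr$ is Calabi--Yau, so its Serre functor is a shift, hence no nontrivial SOD,'' is not quite valid as written because $\Xscr=X^{ss,\chi}/T$ is not proper (it sits over the affine base $\Spec R^T$), so $\D^b(\coh\Xscr)$ does not carry a Serre functor in the classical sense. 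The paper sidesteps this by citing \cite[Corollary A.5]{SVdB5}, which proves the no-SOD statement for such Calabi--Yau Deligne--Mumford GIT stacks directly (using a relative/local form of duality over the affine good moduli space rather than a global Serre functor). If you want a self-contained argument here you would need to reproduce that relative duality reasoning; otherwise, simply invoking that corollary, as the paper does, is the cleanest fix.
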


\begin{proof}
Let $\Lambda=\End_{R^T}(M(U))$ for $U=\bigoplus_{i\in I}\chi_i$. 
Put $\mathcal{E}=\bigoplus_{i\in I}\chi_i \otimes_k\Oscr_{X^{ss,\chi}}$. 
Note that $\Lambda\cong M(\End(U))\cong \bigoplus_{i,j\in I}M(\chi_i-\chi_j)$ as $W$ is generic (see \cite[Lemma 4.1.3]{SVdB}). 
Proposition \ref{faithful} shows that the functor
\[
 -\Lotimes_\Lambda \mathcal{E}: \D(\Lambda) \to \D(X^{ss,\chi}/T)
\]
is fully faithful. As $\gldim \Lambda<\infty$, $\D(\Lambda)$ is an admissible subcategory in $\D(X^{ss,\chi}/T)$.  Since $\chi$ is generic
$X^{ss,\chi}/T$ is a Deligne-Mumford stack (see e.g. \cite[Proposition 2.1]{HLSam}, the proof goes through under the weak symmetry assumption).
We now use \cite[Corollary A.5]{SVdB5} 
 which asserts that $\D(X^{ss,\chi}/T)$ has no nontrivial semi-orthogonal decomposition. 
 Thus, $\D(\Lambda)\cong \D(X^{ss,\chi}/T)$.
\end{proof}

\begin{proof}[Proof of Theorem \ref{thm}]
The result follows immediately from Proposition \ref{prop:git}.
\end{proof}

\section{Example}\label{sec:example}
If $\Delta\subset\RR^n$ is a bounded closed convex polygon and $\varepsilon\in \RR^n$ then $\Delta_\varepsilon=\bigcup_{r>0} \Delta\cap (r\varepsilon+\Delta)$. 
Assume $W$ is quasi-symmetric and generic. The NCCRs that were
constructed in \cite[Theorem 1.6.2.]{SVdB} are given by modules of{}
covariants $M(U)$ where $U$ is the sum of the characters contained in
$1/2\bar{\Sigma}_\varepsilon\cap X(T)$ for a generic (in the sense of Definition \ref{def:generic})
$\varepsilon\in X(T)$. 
One can check that the proofs hold true also if we replace $1/2\bar{\Sigma}_\varepsilon$ by $\nu+1/2\overline{\Sigma}$ for $\nu\in X(T)_\RR$ such that $(\nu+(1/2)\partial\overline{\Sigma})\cap X(T)=\emptyset$.

In this section we give an example which shows that not all toric NCCRs come from $(\nu+1/2\overline{\Sigma})\cap X(T)$ (for $\nu$ as above), so Proposition \ref{prop:git} does not follow from \cite[Corollary 4.2, Remark 4.3]{HLSam}. 

We take $T=G_m$ and let $-3,-2,-2,2,2,3$ be the weights of $W$. The set of Cohen-Macaulay modules of covariants is given by $\{i\mid -7<i<7\}\cup\{-8,8\}$,  which can be deduced from \cite{Vdb1} (c.f. \S\ref{nullcone}).  Note that $\Sigma=(-7,7)$. Let $U$ have  weights  $-4,-2,-1,0,1,2,4$.  This set of weights is not an interval and hence it is not of the form $(\nu+
1/2\bar{\Sigma})\cap X(T)$. However, it is a maximal Cohen-Macaulay clique (see \eqref{1} below).
Thus Proposition \ref{prop:MMA} below implies that $\Lambda=\End_{R^T}(M(U))$ is an NCCR of $R^T$.

\section{Toric NCCRs in the case $T=G_m$}\label{app}
In this section we give an explicit combinatorial criterion (based on Appendix \ref{sec:Jason} written by Jason Bell)   for recognizing toric NCCRs in the case $T=G_m$, and prove that they are all related
by a ``mutation'' procedure, which in particular gives a new proof of Theorem \ref{thm} in this case. 
Meanwhile we obtain some  relations  between NCCRs and ``maximal modification algebras'' which we recall first.
\begin{definition}\cite{IW}\label{def:MMA} 
Let $S$ be a noetherian Cohen-Macaulay ring. 
A reflexive $S$-module $M$ is {\em modifying} if $\End_S(M)$ is Cohen-Macaulay. It is {\em maximal modifying} (MM) if it is modifying and if $M\oplus M'$ is modifying for a reflexive module $M'$ then $M'\in {\rm add M}$ (i.e. $M'$ is a direct summand of direct sums of $M$). If $M$ is an MM module, then $\End_S(M)$ is a {\em maximal modification algebra} (MMA). 
\end{definition} 
We consider a toric variant of this definition. We say that
$\End_{R^T}(M)$ is a {\em toric MMA} if $M$ is a module of covariants
and maximal modifying with respect to modifying modules of covariants.
Hence an MMA is automatically a toric MMA but the converse might
not hold. See \cite[Example 10.1]{SVdB}, \cite[Example 3.3]{SVdB4} for
a (counter)example.

\medskip

An NCCR is always an MMA by
\cite[Proposition 4.5]{IW} and the converse fails in
general (see \cite[Example A.1]{VdB04a} and \cite[Theorem 4.16, Remark 4.17]{IW1}).
Surprisingly, for toric MMAs and NCCRs in the case $T=G_m$ there is no
distinction.

\begin{proposition} (see \S\ref{sec:proof} below) \label{prop:MMA}
If $T=G_m$ and $W$ is a generic and unimodular $T$-representations then  
toric MMAs and toric NCCRs of $\Sym(W)^T$ coincide.
\end{proposition}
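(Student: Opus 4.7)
The plan is to establish the nontrivial direction: every toric MMA is a toric NCCR. The converse (toric NCCR $\Rightarrow$ toric MMA) is a standard consequence of \cite[Proposition 4.5]{IW}, noting that adding any extra summand $M(\mu)$ to the generator would give a module whose endomorphism ring still has finite global dimension, and such an extension cannot be strict.

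First, I would specialize Corollary \ref{form} to $T=G_m$ to obtain an explicit description of the set $\Xscr\subset X(T)=\ZZ$ of characters $\mu$ such that $M(\mu)$ is Cohen-Macaulay. Since for $T=G_m$ only the sign of $0\ne\lambda\in Y(T)$ matters, the forbidden patterns in \eqref{eq:localweights} reduce to two symmetric conditions built from the positive and the negative weights of $W$; unimodularity then makes $\Xscr$ a concrete, symmetric, and essentially cofinite subset of $\ZZ$. A toric MMA has the form $\End_{R^T}(M(U))$ for a finite subset $U\subset \ZZ$ that is maximal under the constraint $U-U\subset \Xscr$, using \cite[Lemma 4.1.3]{SVdB} to identify when $M(U)\oplus M(\mu)$ remains modifying.

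Second, I would invoke Bell's combinatorial lemma from Appendix \ref{sec:Jason} to classify maximal Cohen-Macaulay cliques $U\subset\ZZ$ up to translation. The expected content of Bell's lemma is that every such maximal $U$ is obtained from one of the ``standard'' cliques $U_\nu=(\nu+\tfrac12\bar\Sigma)\cap X(T)$ from \cite[Theorem 1.6.2]{SVdB} (extended as in \S\ref{sec:example}) by a finite sequence of elementary swaps that exchange a single boundary character $\gamma\in U$ for a nearby character $\gamma'\notin U$ such that all new differences $\gamma'-\delta,\,\delta-\gamma'$ (for $\delta\in U\setminus\{\gamma\}$) still lie in $\Xscr$. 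The example in \S\ref{sec:example} illustrates precisely such an elementary swap: starting from the interval $\{-3,\dots,3\}$ one mutates $\pm3$ to $\pm4$.

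Third, I would show each maximal Cohen-Macaulay clique $U$ yields an algebra of finite global dimension. For the reference cliques $U_\nu$, finiteness of global dimension is exactly \cite[Theorem 1.6.2]{SVdB} together with its extension discussed in \S\ref{sec:example}. For an arbitrary maximal clique, the chain of elementary swaps provided by Bell's lemma gives a sequence $U_\nu=U^{(0)}\leadsto U^{(1)}\leadsto\cdots\leadsto U^{(k)}=U$. Each step replaces a single summand, and one verifies that this is a tilting mutation: the swapped pair $(\gamma,\gamma')$ produces short (at most two-step) resolutions of $M(\gamma)$ and $M(\gamma')$ by sums of the other $M(\delta)$'s, using that the relevant differences sit in $\Xscr$. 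Hence each step preserves finite global dimension, and the result propagates from $U_\nu$ to $U$.

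The main obstacle is the second step: producing and exploiting Bell's classification of maximal Cohen-Macaulay cliques, together with the mutation graph connecting an arbitrary clique to a reference clique $U_\nu$. Once this combinatorial backbone is in hand, the tilting/mutation argument in step three is routine for $T=G_m$ because reflexive $R^T$-summands are rank-one and each mutation modifies a single character, reducing the analysis of projective resolutions to elementary combinatorics in $\Xscr$.
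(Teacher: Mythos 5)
Your overall strategy coincides with the paper's: describe the Cohen--Macaulay locus combinatorially, use Bell's result to control maximal cliques, connect cliques by tilting mutations, and propagate finite global dimension from a reference NCCR. The converse direction (toric NCCR $\Rightarrow$ toric MMA) via \cite[Proposition 4.5]{IW} also matches.

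However, your account of what Bell's lemma actually provides is off, and this is not a cosmetic point. What Appendix~\ref{sec:Jason} proves (Lemma~\ref{lem: i} and Corollary~\ref{cor:size}) is a \emph{mod-$N$ rigidity statement}: every maximal Cohen--Macaulay clique contains \emph{exactly one} element in each residue class modulo $N=\sum_{\alpha_i>0}\alpha_i$, hence has size exactly $N$. Bell's lemma does \emph{not} directly say that cliques are linked to the ``standard'' cliques $U_\nu=(\nu+\tfrac12\bar\Sigma)\cap X(T)$ by elementary boundary swaps. That connectivity is a separate deduction (Corollary~\ref{cor:mutation}), and the mutation there is not ``exchange a boundary character for a nearby one'': it is the specific, long-range operation of replacing $\mu_{\max}$ by $\mu_{\max}-N$. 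Your reading of \S\ref{sec:example} illustrates the misconception: $\{-4,-2,-1,0,1,2,4\}$ is not obtained from $\{-3,\dots,3\}$ by swapping $\pm 3\leftrightarrow\pm 4$; rather, a single application of the paper's mutation (after normalizing so the clique lies in $\NN$ with minimum $0$) replaces the top element by a jump of $N=7$, landing directly on a translate of the interval $[0,6]$. The advantage of the paper's concrete mutation is twofold: the mod-$N$ structure guarantees the mutated set is still a maximal clique, and the induction on $\max\Lscr$ (after normalizing $0\in\Lscr\subset\NN$) terminates in finitely many steps at $[0,N-1]$.

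The other place where your outline underestimates the work is Step~3: you call the tilting verification ``routine,'' but it is in fact the bulk of the argument (Proposition~\ref{prop:tilting}). Proving that $\Hom_{R^T}(M',M)$ is a tilting $\Lambda'$-module requires producing an explicit finite $\Lambda'$-projective resolution of $\Hom_{R^T}(M',M(\mu_{\max}))$ from the Koszul complex on $K_+=\mathrm{span}\{w_i\mid\alpha_i>0\}$, and checking that the relevant $T$-invariant pieces vanish; both steps hinge on the clique property via Corollary~\ref{cor:mutation}. So your decomposition of the problem is correct, but the proposal both misattributes the content of Bell's lemma and glosses over the tilting computation that makes the induction run.
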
 
We next describe our strategy for an alternative proof of Theorem \ref{thm}. Note that we may and we will assume that $W$ has no zero weights since extra zero weights do not affect the NCCR property.
\begin{enumerate}
\item\label{1}  
We say that $S\subset X(T)\cong \ZZ$ is a {\em Cohen-Macaulay clique} \cite{Bocklandt} if for every $i,j\in S$ the module of covariants $M(i-j)$ is Cohen-Macaulay. 
Note that toric MMAs correspond precisely to {maximal Cohen-Macaulay cliques}. 
A combinatorial argument provided by Jason Bell (see Appendix \ref{sec:Jason}) 
shows that each maximal Cohen-Macaulay clique contains exactly one element congruent to $i$ for $0\leq i<N:=\sum_{\alpha_i>0}\alpha_i$.

\item\label{2} Using the complexes connecting projective
  $(T,R)$-modules \cite[\S11]{SVdB5} and \eqref{1} we show that for a
  toric MMA $\Lambda=\End_{R^T}(M)$ and for $\Lambda'=\End_{R^T}(M')$,
  where $M'=M(U')$ is obtained from $M=M(U)$ by replacing the highest
  weight $\mu_{\max}$ of $U$ by $\mu_{\max}-N$, the
  $(\Lambda,\Lambda')$-bimodule $\Hom_{R^T}(M',M)$ defines a derived
  equivalence between $\Lambda'$ and $\Lambda$. 
\item By induction on the maximum difference between the weights of $U$ we can therefore, using \eqref{1}, construct a derived equivalence between 
the ``standard'' NCCR $\End_{R^T}(\oplus_{0\leq \mu<N}M(\mu))$ \cite[Theorem 8.9]{VdB04} and a toric MMA $\End_{R^T}(M(U))$. 
\end{enumerate} 

\subsection{Toric MMAs}
In this section we deduce from Appendix \ref{sec:Jason} that all maximal Cohen-Macaulay cliques have the same size, and moreover that they have a  particular form which will be of vital importance in \S\ref{sec:IW}. 

We write $T^+=T_1^+=\{\alpha_i>0\}$, $T^-=T_1^-=\{\alpha_i<0\}$. 
Setting $\{a_i\}_i=\{\alpha_i\}_{i\in T^+}\cup\{-\alpha_i\}_{i\in T^-}$ and $N=\sum_{i\in T^+}\alpha_i=-\sum_{i\in T^-}\alpha_i$ we deduce from Lemma \ref{nullcone} and Theorem \ref{theorem} that $\Sscr=\Sscr_+\cup\Sscr_-$ in Appendix \ref{sec:Jason} corresponds to the set of non-Cohen-Macaulay weights (i.e. weights $\mu$ such that $M(\mu)$ is not Cohen-Macaulay).

\begin{remark}\label{rem:trivial}
Note that $\Sscr=-\Sscr$ so $\Mscr$ is a Cohen-Macaulay clique if for all 
$i,j\in \Mscr$ we have $M(i-j)$ or $M(j-i)$ is Cohen-Macaulay.
\end{remark}

The following corollary is an immediate consequence of Corollary \ref{cor:size}.

\begin{corollary}\label{cor:cong:i}
If $\T$ is a maximal Cohen-Macaulay clique, then for every $0\leq i<N$ there exists a unique $m\in \T$ such that $m\equiv i\,(N)$. 
\end{corollary}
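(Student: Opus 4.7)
The plan is to combine a residue-class obstruction, coming from the explicit form of the non-Cohen-Macaulay set $\Sscr$ established earlier in this section, with the size/structure bound supplied by Bell's combinatorial lemma in the appendix.

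I would first check that every nonzero multiple of $N$ lies in $\Sscr$. Specializing Corollary~\ref{form} to $\lambda = 1$, the formula \eqref{eq:localweights} reduces (since we are assuming no zero weights) to $-N - \sum_{i \in T^+} a_i \alpha_i + \sum_{i \in T^-} b_i \alpha_i$; taking the $T^+$-coefficients to be a common nonnegative integer $k$ and the $T^-$-coefficients to vanish produces the weight $-(k+1)N$, so $-N, -2N, -3N, \ldots \in \Sscr_-$. The symmetric argument with $\lambda = -1$ yields $N, 2N, 3N, \ldots \in \Sscr_+$, so $N\ZZ \setminus \{0\} \subset \Sscr$. From this uniqueness is immediate: if $m, m' \in \T$ were distinct and congruent modulo $N$, then $m - m' \in N\ZZ \setminus \{0\} \subset \Sscr$, so $M(m - m')$ would fail to be Cohen-Macaulay, contradicting the hypothesis that $\T$ is a Cohen-Macaulay clique. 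In particular, $|\T| \le N$ and the elements of $\T$ represent pairwise distinct residue classes.

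For existence I would invoke Corollary~\ref{cor:size} from Appendix~\ref{sec:Jason}, which supplies the matching lower bound $|\T| \ge N$ (phrased intrinsically in the combinatorial language of the appendix, via the description $\Sscr = \Sscr_+ \cup \Sscr_-$ recorded after Remark~\ref{rem:trivial}). Combining the two bounds forces $|\T| = N$, so by pigeonhole each of the $N$ residue classes modulo $N$ contains exactly one element of $\T$. The substantive content is therefore entirely encapsulated in Corollary~\ref{cor:size}, and the main obstacle is the combinatorial lemma proved in Bell's appendix; once it is granted, the present corollary is indeed an immediate consequence, with the dictionary above as the only bridge.
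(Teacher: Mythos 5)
Your proof is correct and follows essentially the paper's route, which simply cites Corollary~\ref{cor:size}; your extra step of re-deriving the uniqueness bound from Corollary~\ref{form} is redundant rather than a different approach (the appendix's proof of Corollary~\ref{cor:size} already uses $\{N,2N,\dots\}\subset\Sscr_+$ for exactly this purpose), and the substantive content in both proofs is Bell's combinatorial Lemma~\ref{lem: i}. One caveat worth flagging: Corollary~\ref{form} as printed reads $a_i>0$, $b_i>0$, which would disallow your choices $a_i=k\geq 0$, $b_i=0$; this is a typo for $a_i,b_i\geq 0$ (forced by \eqref{loccohstrata}, where $\Sym^t(W^{\lambda,>})^*$ and $R/I^{\lambda,>}$ contribute with nonnegative exponents, and required for the resulting weight set to match $\Sscr_\pm$ of the appendix), so your reading is the intended one.
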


For the construction of derived equivalence via ``mutation''  we will also need the following easy corollary.
\begin{corollary}\label{cor:mutation}
Let $\T$ be a maximal Cohen-Macaulay clique, let $m_{\max}=\max \T$ and let $\emptyset \neq S\subseteq T^-$. Then $m_{\max}+\sum_{i\in S}\alpha_i\in\T$ and $(\T\setminus \{m_{\max}\})\cup \{m_{\max}-N\}$ is a maximal Cohen-Macaulay clique.
\end{corollary}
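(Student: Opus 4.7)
The plan is to combine the explicit description of non-Cohen-Macaulay weights from Corollary~\ref{form} with the one-per-residue-class property from Corollary~\ref{cor:cong:i}. In the case $T=G_m$ (with no zero weights, so $T^0_\lambda=\emptyset$), specializing Corollary~\ref{form} to $\lambda=\pm 1$ gives
\[
\Sscr=\Sscr_+\sqcup\Sscr_-,\qquad \Sscr_+=N+M,\qquad \Sscr_-=-\Sscr_+,
\]
where $\Sscr$ is the set of non-CM weights and $M=\NN\langle|\alpha_j|\rangle_{j=1}^{d}\subseteq\NN$ is the submonoid generated by the absolute values of the weights. Two consequences drive the proof: (i) every nonzero integer multiple of $N$ lies in $\Sscr$, since $kN=N+(k-1)\sum_{i\in T^+}\alpha_i\in N+M$ for $k\ge 1$, together with $\Sscr_-=-\Sscr_+$; and (ii) $\Sscr_++N\subseteq\Sscr_+$, from $N\in M$ and the additive closure of $M$.

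For the first statement (read with the implicit restriction $S\subsetneq T^-$; the boundary case $S=T^-$ produces $m_{\max}-N$, which is precisely the new element appearing in the second statement and cannot lie in $\T$), set $\mu=m_{\max}+\sum_{i\in S}\alpha_i$. Then $0<m_{\max}-\mu=\sum_{i\in S}|\alpha_i|<N$. Let $t_0$ be the unique element of $\T$ congruent to $\mu$ modulo $N$ (Corollary~\ref{cor:cong:i}) and write $t_0=\mu+jN$ with $j\in\ZZ$. The case $j\ge 1$ is immediately excluded because $t_0\ge\mu+N>m_{\max}$. The case $j\le -1$ is excluded by
\[
m_{\max}-t_0=(m_{\max}-\mu)+|j|N=N+\Bigl((|j|-1)\sum_{i\in T^+}\alpha_i+\sum_{i\in S}|\alpha_i|\Bigr)\in N+M=\Sscr_+,
\]
contradicting that $\T$ is a CM clique containing both $m_{\max}$ and $t_0$. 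Hence $j=0$ and $\mu=t_0\in\T$.

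For the second statement, set $\T'=(\T\setminus\{m_{\max}\})\cup\{m_{\max}-N\}$. Since $m_{\max}-N\equiv m_{\max}\pmod{N}$, Corollary~\ref{cor:cong:i} forces $m_{\max}-N\notin\T$, so $|\T'|=N$ and $\T'$ still represents every residue class mod $N$ exactly once. To show $\T'$ is a CM clique only the new pairings matter: for $t\in\T\setminus\{m_{\max}\}$, set $r:=m_{\max}-t\ge 1$; we know $r\notin\Sscr$ and want $N-r\notin\Sscr$. If $r<2N$ then $|N-r|<N$, so $N-r\notin\Sscr$; the value $r=2N$ is impossible by~(i); and if $r>2N$, then $N-r\in\Sscr_-$ would give $r-N\in\Sscr_+$, whence by~(ii) $r=(r-N)+N\in\Sscr_++N\subseteq\Sscr_+$, contradicting $r\notin\Sscr$. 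Finally, $\T'$ is maximal: any strict enlargement would contain two elements congruent modulo $N$, whose nonzero difference would lie in $\Sscr$ by~(i).

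The only genuinely non-trivial step is the case $r>2N$ in the CM-clique verification, and it is resolved precisely by the additive closure $\Sscr_++N\subseteq\Sscr_+$ encoded in the shape $\Sscr_+=N+M$ from Corollary~\ref{form}. Everything else reduces to counting residue classes modulo $N$ and elementary sign bookkeeping.
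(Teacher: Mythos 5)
Your proof is correct and follows the paper's line of argument: locate the unique representative in $\T$ of the residue class of $\mu=m_{\max}+\sum_{i\in S}\alpha_i$ via Corollary~\ref{cor:cong:i}, rule out the other shifts $\mu+jN$ by producing a difference with $m_{\max}$ lying in $\Sscr_+$, and for the mutation use the stability $\Sscr_++N\subseteq\Sscr_+$. One point in your write-up is a genuine sharpening: you explicitly exclude the boundary case $S=T^-$, and this is necessary, since for $S=T^-$ the first claim reads $m_{\max}-N\in\T$, which contradicts Corollary~\ref{cor:cong:i}. The paper's passage ``$m=\mu-kN\in\T$ for some $k\geq 0$'' silently uses $S\subsetneq T^-$: only then does maximality of $m_{\max}$ force $k\geq 0$, whereas for $S=T^-$ one also has the solution $k=-1$, $m=m_{\max}$, for which the subsequent contradiction does not materialize. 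Two cosmetic differences: your three-way split on $r=m_{\max}-t$ is equivalent to, but more circuitous than, the paper's single observation that $m_{\max}-N-t>-N$ immediately rules out $\Sscr_-$, after which the same shift-by-$N$ argument applies; and you spell out the maximality of $\T'$ (via the residue count), which the paper leaves implicit as a consequence of Corollary~\ref{cor:size}.
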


\begin{proof}
By Corollary \ref{cor:cong:i} and the maximality assumption we have that $m:=m_{\max}+\sum_{i\in S}\alpha_i-kN\in \T$ for some $k\geq 0$. 
Since $m_{\max}-m=kN-\sum_{i\in S}\alpha_i\not\in \Sscr^+$ we must have $k=0$. 

For the second claim we need to show that $-N<m_{\max}-N-m\not\in\Sscr_+$ for $m\in \T$ by Remark \ref{rem:trivial}.   
If $m_{\max}-N-m\in \Sscr_+$ then also $m_{\max}-m=(m_{\max}-N-m)+N\in \Sscr_+$, a contradiction.
\end{proof}
\subsection{Derived equivalence}\label{sec:IW}
Let $\Lambda=\End_{R^T}(M(U))$ be a toric MMA. 
Up to Morita equivalence we may, and will, assume  that every weight in $U$ occurs with multiplicity
$1$. Let $\mu_{max}$ be the maximal weight of $U$ and let $U'$ be a
representation given by replacing $\mu_{\max}$ in $U$ by
$\mu_{\max}-N$.
We will refer to $\Lambda'=\End_{R^T}(M(U'))$ as a {\em mutation} of
$\Lambda$.

In this section we show that a toric MMA and its mutation are derived
equivalent. Repeating the mutation we obtain that a toric MMA is
derived equivalent to the ``standard'' NCCR, providing an alternative
proof of Theorem \ref{thm}.
\subsubsection{Derived equivalence of a toric MMA and its mutation}
Let $\Lambda$ be a noetherian ring. A finitely generated
$\Lambda$-module $X$ is {\em tilting} if 
\begin{enumerate}
\item\label{t1}
$\pdim_{\Lambda}X<\infty$,
\item\label{t2}
$\Ext^i_\Lambda(X,X)=0$ for $i>0$,
\item\label{t3}
$X$ is a generator of $D(\Lambda)$.
\end{enumerate} 
It is a classical result that $\Lambda$ and $\End_\Lambda(X)$ are derived equivalent \cite{happel}. 
\begin{proposition}\label{prop:tilting}
Let notation be as above and denote $M=M(U)$, $M'=M(U')$. 
The $(\Lambda,\Lambda')$-bimodule $X=\Hom_{R^T}(M',M)$ is a {\em tilting} $\Lambda'$-module. In particular, $\Lambda'$ and $\Lambda=\End_{\Lambda'}(X)$ are derived equivalent.
\end{proposition}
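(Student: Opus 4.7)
The plan is to verify the three tilting conditions for $X=\Hom_{R^T}(M',M)$, viewed as a right $\Lambda'$-module, and then invoke the classical tilting theorem \cite{happel}; the identification $\End_{\Lambda'}(X)=\Lambda$ will drop out of the same analysis. Writing $U=U_0\sqcup\{\mu_{\max}\}$ and $U'=U_0\sqcup\{\mu_{\max}-N\}$ and setting $N_0=M(U_0)$, one has $M=N_0\oplus M(\mu_{\max})$, $M'=N_0\oplus M(\mu_{\max}-N)$ and
\begin{equation*}
X=\Hom_{R^T}(M',N_0)\oplus\Hom_{R^T}(M',M(\mu_{\max})).
\end{equation*}
The first summand is a direct summand of $\Lambda'$, hence projective; the real work is to understand the second summand.

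Since the whole statement depends only on pairwise differences of weights, we may shift $U$ (and $U'$) by a common integer so that $\mu_{\max}>0$. The plan is then to use the Koszul complex on the weight vectors $\{w_i:i\in T^-\}$, which resolves $R/I^-=\Sym W^+$: tensoring it with $k_{\mu_{\max}}$ and taking $T$-invariants should produce the exact sequence of $R^T$-modules
\begin{equation}\label{pl:kosz}
0\to M(\mu_{\max}-N)\to\bigoplus_{|S|=|T^-|-1}M\!\bigl(\mu_{\max}+\textstyle\sum_{i\in S}\alpha_i\bigr)\to\cdots\to M(\mu_{\max})\to 0,
\end{equation}
where surjectivity at the right end uses that the cokernel $(\Sym W^+)^T_{-\mu_{\max}}$ vanishes when $\mu_{\max}>0$. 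Corollary \ref{cor:mutation} then gives what we need: every intermediate weight $\mu_{\max}+\sum_{i\in S}\alpha_i$ with $\emptyset\neq S\subsetneq T^-$ lies in $U_0$, while $\mu_{\max}-N\in U'$, so every term of \eqref{pl:kosz} other than $M(\mu_{\max})$ is in $\add(M')$.

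Applying $\Hom_{R^T}(M',-)$ to \eqref{pl:kosz} will produce a length-$|T^-|$ complex of projective right $\Lambda'$-modules. The key claim to be proved is that this complex is acyclic; granting it, the complex becomes a finite projective resolution of $\Hom_{R^T}(M',M(\mu_{\max}))$ (supplying the projective-dimension condition), and the same acyclicity will identify $\RHom_{\Lambda'}(X,X)$ with $\End_{R^T}(M)=\Lambda$ in degree $0$, giving simultaneously the $\Ext$-vanishing condition and the equality $\End_{\Lambda'}(X)=\Lambda$. The generator condition should then follow because $N_0$ accounts for every indecomposable projective summand of $\Lambda'$ except $\Hom_{R^T}(M',M(\mu_{\max}-N))$, and the latter appears at the left end of the resolution from \eqref{pl:kosz}, so it will lie in the thick subcategory of $D(\Lambda')$ generated by $X$.

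The hard part will be this acyclicity, equivalently the Ext vanishing $\Ext^{>0}_{R^T}(M',M(\nu))=0$ for the intermediate weights $\nu$ in \eqref{pl:kosz}. This is where the maximality of the Cohen-Macaulay clique (used through Corollary \ref{cor:mutation}, which ensures that $\{M(\mu):\mu\in U\cup U'\}$ is a modifying family) and the Koszul-type complexes of projective $(T,R)$-modules of \cite[\S11]{SVdB5} will play the central role.
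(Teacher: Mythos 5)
Your plan follows the paper's route (Koszul complex $\Kscr_+\otimes\mu_{\max}$ on the negative-weight vectors, reduction to clique data via Corollary \ref{cor:mutation}), but two steps are mishandled.

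First, the acyclicity of your complex is not ``equivalently the $\Ext$ vanishing $\Ext^{>0}_{R^T}(M',M(\nu))=0$'': the terms $M(\nu)$ are not $R^T$-projective, so the cohomology of $\Hom_{R^T}(M',-)$ of an exact complex is not governed by $\Ext_{R^T}(M',\text{terms})$, and such $\Ext$-vanishing over the singular ring $R^T$ is not to be expected. The paper avoids this entirely by not taking $T$-invariants first: it applies the functor $\Hom_R(U'\otimes_k R,-)^T$ to the $T$-equivariant complex $\Kscr_+\otimes\mu_{\max}$ of projective $(T,R)$-modules, where exactness is automatic, and then identifies the result term-by-term with $\Hom_{R^T}(M',(\Kscr_+\otimes\mu_{\max})^T)$ via genericity (\cite[Lemma 4.1.3]{SVdB}). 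Your version can be repaired by making the same identification, but as written the ``hard part'' is both left open and pointed in the wrong direction.

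Second, and more seriously, condition \eqref{t2} does not ``follow from the same acyclicity.'' Having a finite $\Lambda'$-projective resolution $Q^\bullet$ of $X_\ell=\Hom_{R^T}(M',M(\mu_{\max}))$ only lets you compute $\Ext^{*}_{\Lambda'}(X_\ell,X)$ as the cohomology of $\Hom_{\Lambda'}(Q^\bullet,X)$; by Yoneda this is $\Hom_{R^T}((\Kscr_+\otimes\mu_{\max})^T_{\text{trunc}},M(\mu_i))$, and $\Hom_{R^T}(-,M(\mu_i))$ is not exact, so nothing vanishes for free. The paper proves the needed vanishing by a separate computation: rewrite the complex as $\Hom_R(\Kscr_+,R\otimes(-\mu_{\max}+\mu_i))^T$, use Koszul self-duality $\Hom_R(\Kscr_+,R)\cong\wedge^{d^-}K_-^*\otimes\Kscr_+[-d^-]$, and then check that $(\wedge^{d^-}K_-^*\otimes R_+\otimes(-\mu_{\max}+\mu_i))^T=0$ because $\mu_{\max}-\mu_i\notin\Sscr_+$ (the Cohen-Macaulay clique condition on $U$, not $U'$). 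This argument is entirely absent from your proposal and is the mathematical heart of \eqref{t2}. The remaining conditions \eqref{t1} and \eqref{t3} are handled correctly in your sketch once the acyclicity of $Q^\bullet$ is established.
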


\begin{proof}{}
We will verify the tilting conditions \eqref{t1}-\eqref{t3} by employing complexes used in \cite[\S 11.2]{SVdB} for constructing NCCRs (and also known as a part of Weyman's geometric method \cite{weyman2003cohomology}). 
Let $K_{-}={\rm span}\{w_i\mid\alpha_i<0\}$,  $K_+={\rm span}\{w_i\mid \alpha_i>0\}$,
 $d^\pm=\dim K_{\pm}=|T^{\pm}|$. 
The complexes are obtained from the Koszul resolutions $\Kscr_\pm$ of $R_\pm=\Sym(W/K_{\mp})$,   which remain exact after tensoring with $\chi\in X(T)$: 
\begin{equation}\label{eq:complex-}0\r \chi\otimes_k\wedge^{d^\mp}K_{\mp1} \otimes_k R\r \cdots\r \chi\otimes_kK_{\mp1}\otimes_k R\r \chi\otimes_k R\r \chi\otimes_k R_\pm\r 0.\end{equation}

To show \eqref{t1} let us recall that the indecomposable projective
right $\Lambda'$-modules are of the form $\Hom_{R^T}(M',M(\mu))$ where
$\mu$ is a weight of $U'$. Let $\mu_1< \dots< \mu_\ell=\mu_{\max}$ be
the weights of $U$.  Since
$X=\oplus_{i<\ell}\Hom_{R^T}(M',M(\mu_i))\oplus
\Hom_{R^T}(M',M(\mu_\ell))$, it is enough to show that $\pdim_{\Lambda'}
\Hom_{R^T}(M',M(\mu_\ell))$ is finite.

We use \eqref{eq:complex-}  with $\Kscr_+$ and $\chi=\mu_\ell$. 
Note that $\Hom_{R}(U'\otimes_k R,\mu_\ell\otimes_k R_+)^T=0$ since 
otherwise $-\mu+\mu_\ell+\sum_{i\in T^+}a_i\alpha_i=0$ for some weight $\mu$ of $U'$ and $a_i\in \NN$, and consequently $\mu-\mu_\ell+N=N+\sum_{i\in T^+}a_i\alpha_i\in \Sscr_+$, which contradicts the fact that the set of weights of $U'$ is a Cohen-Macaulay clique by Corollary \ref{cor:mutation}. Thus, applying $\Hom_{R}(U'\otimes_k R, -)^T$ to this complex we obtain, using Corollary \ref{cor:mutation}, a
$\Lambda'$-projective resolution $Q^\bullet$ of $\Hom_{R^T}(M',M(\mu_\ell))$. Hence \eqref{t1} follows. Moreover, from $Q^\bullet$ we also obtain \eqref{t3}. 

For \eqref{t2} we need to show that $\RHom_{\Lambda'}(X,X)$ has cohomology only in degree $0$. Since $\Hom_{R^T}(M',M(\mu_i))$, $i<\ell$, are projective $\Lambda'$-modules, it suffices to show that $\RHom_{\Lambda'}(\Hom_{R^T}(M',M(\mu_\ell)),\Hom_{R^T}(M',M(\mu_i)))$ 
has cohomology only in degree $0$. 
We can replace $\Hom_{R^T}(M',M(\mu_\ell))$ by 
$Q^\bullet=\Hom_{R^T}(M',\sigma_{<0}(\Kscr_+\otimes_k \mu_\ell[-1])^T)$. We have
\begin{align*}
\RHom_{\Lambda'}&(\Hom_{R^T}(M',M(\mu_\ell)),\Hom_{R^T}(M',M(\mu_i)))\\&= 
\Hom_{\Lambda'}(Q^\bullet,\Hom_{R^T}(M',M(\mu_i)))\\
&= 
\Hom_{\Lambda'}(\Hom_{R^T}(M',\sigma_{<0}(\Kscr_+\otimes_k \mu_\ell[-1])^T),\Hom_{R^T}(M',M(\mu_i)))\\
&=\Hom_{R^T}(\sigma_{<0}(\Kscr_+\otimes_k \mu_\ell[-1])^T,M(\mu_i)))\\
&=\sigma_{>0}(\Hom_{R^T}((\Kscr_+\otimes_k \mu_\ell)^T,M(\mu_i))[1])\\
&=\sigma_{>0}\Hom_R(\Kscr_+,R\otimes_k (-\mu_\ell+\mu_i))^T[1]\\
&=\sigma_{>0}(\wedge^{d_-}{K_-^*}\otimes_k\Kscr_+[-d] \otimes_k (-\mu_\ell+\mu_i))^T[1]
\end{align*}
which is exact in degrees $>0$ since $R_+$ is the cohomology of $\Kscr_+$ and 
\[
((\wedge^{d_-}K_-)^*\otimes_k R_+\otimes_k (-\mu_\ell+\mu_i))^T=(R_+\otimes_k (N-\mu_\ell+\mu_i))^T=0
\]
as otherwise $\sum_{i\in T^+}a_i\alpha_i+N-\mu_\ell+\mu_i=0$ for some $a_i\in \NN$ which contradicts the fact that $\mu_\ell-\mu_i\not\in \Sscr_+$ (as the set of weights of $U$ is a Cohen-Macaulay clique).
\end{proof}

\subsubsection{Proof of Theorem \ref{thm} and Proposition \ref{prop:MMA}}\label{sec:proof}
Let $\Lambda=\End_{R^T}(M(U))$ be a toric MMA. Recall that every (toric) NCCR is a (toric) MMA. By Morita equivalence we can assume that every weight of $U$ appears with multiplicity $1$.  Let $\Lscr$ be the set of weights of $U$.
By translation we can assume $0\in\Lscr\subset\NN$.  We argue by
induction on $\max \Lscr$ that $\Lambda$ is derived equivalent to
$\Lambda_0=\End_{R^T}(M(U_0))$ where the weights of $U_0$ are given by
$\Lscr_0=[0,N-1]$.

By Corollary \ref{cor:cong:i}, $\max \Lscr\geq N-1$, and if $\max\Lscr=N-1$ then $\Lambda=\Lambda_0$. Thus, we can assume that $\max \Lscr\geq N$. 
Let $\Lambda'=\End_{R^T}(U')$ be a mutation of $\Lambda$, and let $\Lscr'$ be set of weights of $U'$. 
Then $\Lambda'$ is a toric MMA by Corollary \ref{cor:mutation}, with $0\in \Lscr'\subset \NN$ and $\max \Lscr'<\max \Lscr$. 
By induction, $\Lambda'$ is derived equivalent to $\Lambda_0$. 
We can use Proposition \ref{prop:tilting} to conclude that  $\Lambda$ is derived equivalent to $\Lambda'$ and thus to $\Lambda_0$. In particular, $\Lambda$ 
is an NCCR, proving also Proposition \ref{prop:MMA}.

\appendix
\section{Appendix by Jason P. Bell}\label{sec:Jason}
By \S\ref{nullcone}, understanding the maximal Cohen-Macaulay cliques reduces to a purely combinatorial problem which is a subject of this section. 
Let $a_1,\ldots ,a_d\in \NN_{>0}$. 
In addition, we assume that ${\rm gcd}(a_i)=1$. 
As a consequence
every sufficiently large natural number can be expressed as a linear combination of the $a_i$
 with nonnegative integer coefficients. 
 Let $N\in \NN$. 
We define
\begin{equation*}
\mathcal{S}_+:=\left\{ N + \sum_i c_i a_i \colon c_i\ge 0\right\},\quad\mathcal{S}_{-}=-\mathcal{S}_{+},\quad \mathcal{S}=\mathcal{S}_+\cup \mathcal{S}_-.
\end{equation*}
Then $\mathcal{S}$ contains all but finitely many integers.

Let 
\[
\TT=\{\T\subset \ZZ\mid m,m'\in \T\implies m-m'\not\in \mathcal{S}\}.
\]
Given $i\in \{0,\ldots ,N-1\}$, we define $p(i)$ to be the smallest positive integer $p$ such that $i+pN$ is in $\mathcal{S}_+$.  Similarly, $q(i)$ is the largest negative integer $q$ such that $i+qN$ is in $\mathcal{S}_{-}$.  Let $\T\in \TT$ and let $j\in \mathbb{Z}\setminus \T$.  We say that $m\in \T$ \emph{blocks} $j$ if $j-m\in \mathcal{S}$.  Notice that if $\T$ is a maximal element of $\TT$ then for each element in the complement of $\T$ there is necessarily some element of $\T$ that blocks it.
\begin{lemma}
\label{lem: i}
Let $\T\in\TT$ be a maximal element of $\TT$ containing $0$ and let $i\in \{0,\ldots ,N-1\}$.  Then $\T$ contains an element that is congruent to $i$ modulo $N$.
\end{lemma}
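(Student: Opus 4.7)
The plan is to argue by contradiction: assume $\T$ is maximal in $\TT$ with $0\in\T$ but no element of $\T$ has residue $i$ modulo $N$, and derive a contradiction by exhibiting an unblocked $j\equiv i\pmod{N}$. For $p\in\ZZ$ write $j_p:=i+pN$; by assumption every $j_p$ lies in $\ZZ\setminus\T$, so by maximality every $j_p$ is blocked by some $m\in\T$.

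The key monotonicity relies on $\mathcal{S}_+$ being closed under the shift $+N$: since $N$ is itself a sum of some of the $a_i$'s (in the setup of the paper, $N=\sum_{i\in T^+}\alpha_i$), we have $N\in\langle a_1,\ldots,a_d\rangle$ and hence $\mathcal{S}_++N\subseteq \mathcal{S}_+$. Therefore if $m\in\T$ blocks $j_p$ \emph{from below} (i.e.\ $j_p-m\in\mathcal{S}_+$) it also blocks $j_{p'}$ from below for every $p'\geq p$; symmetrically, a blocker \emph{from above} (i.e.\ $j_p-m\in\mathcal{S}_-$) works for every $p'\leq p$. Hence the set of $p$ blocked from below is an up-set $[P^-,\infty)$ and the set blocked from above is a down-set $(-\infty,P^+]$. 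Since $0\in\T$ blocks $j_{p(i)}$ from below and $j_{q(i)}$ from above, $P^-\leq p(i)$ and $P^+\geq q(i)$.

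Maximality forces $P^-\leq P^++1$: otherwise for any $p\in(P^+,P^-)$ we have $q(i)<p<p(i)$, so $j_p\notin\mathcal{S}$ and $0$ does not block $j_p$, yet $j_p$ is blocked neither from below nor from above, so $\T\cup\{j_p\}\in\TT$ would enlarge $\T$. The contradiction now comes from a double-blocking calculation. In the overlap case $P^-\leq P^+$, pick any $p\in[P^-,P^+]$ and choose $m^-,m^+\in\T$ blocking $j_p$ from below and above respectively; writing $j_p-m^-=N+k$, $m^+-j_p=N+a$ with $k,a\in\langle a_1,\ldots,a_d\rangle$, we compute
\[
m^+-m^-=2N+a+k=N+(N+a+k)\in N+\langle a_1,\ldots,a_d\rangle=\mathcal{S}_+,
\]
contradicting $\T\in\TT$ (here $m^-\neq m^+$ because $m^-\leq j_p-N<j_p+N\leq m^+$). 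In the tight case $P^-=P^++1$ the same computation applied to $m^-$ blocking $j_{P^-}$ from below and $m^+$ blocking $j_{P^+}=j_{P^-}-N$ from above yields $m^+-m^-=N+a+k\in\mathcal{S}_+$, again impossible.

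The main obstacle I anticipate is establishing the monotonicity of ``blocked from below/above'' in $p$, which depends crucially on $\mathcal{S}_++N\subseteq\mathcal{S}_+$, i.e.\ on $N\in\langle a_1,\ldots,a_d\rangle$. Once this is granted the argument is a straightforward up-set/down-set analysis capped by the additive identity $2N+a+k\in\mathcal{S}_+$, with a minor care point that $m^-$ and $m^+$ are genuinely distinct, dispatched by the sandwich $m^-\leq j_p-N<j_p+N\leq m^+$.
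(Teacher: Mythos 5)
Your proof has a genuine, though fixable, gap. Both the monotonicity claim (that blocking from below propagates to larger $p$) and the overlap case $P^-\le P^+$ use $\mathcal{S}_+ +N\subseteq\mathcal{S}_+$, i.e.\ $N\in\langle a_1,\ldots,a_d\rangle_{\NN}$. You invoke this as if it were part of the setup, but the lemma is stated for an arbitrary $N\in\NN$; the hypothesis that $N$ is an $\NN$-linear combination of the $a_i$ is only introduced later, in Corollary~\ref{cor:size}. So as written you prove a restricted version of the statement. (In the application to Cohen--Macaulay cliques the hypothesis does hold, which is presumably why you felt free to use it, but the lemma is deliberately phrased more generally.)

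The paper's proof avoids the issue by restricting attention to the finite window $j\in\{q(i),\ldots,p(i)\}$, choosing one blocker $m_j\in\T$ per $j$, partitioning the window into $X_\pm$ by the sign of $m_j-(i+jN)$, and running a discrete intermediate-value argument to find \emph{consecutive} indices $j\in X_+$, $j+1\in X_-$. For consecutive indices the targets $i+jN$ and $i+(j+1)N$ differ by exactly $N$, so the subtraction yields $m_j-m_{j+1}=N+k_1+k_2\in\mathcal{S}_+$ with no dependence on whether $N$ itself is representable --- this is precisely your ``tight case,'' which indeed works for arbitrary $N$. It is only the up-set/down-set scaffolding (and the resulting ``overlap case,'' where the two blockers block the \emph{same} $j_p$ and you get a stray extra $N$) that imports the unwanted assumption. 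One way to repair your argument without it: observe that $\mathcal{S}$ is cofinite so $\T$ is finite, hence $P^-:=\min\{p:j_p\text{ is blocked from below}\}$ exists without any monotonicity; then $j_{P^--1}$ is not blocked from below, so it must be blocked from above, and your tight-case computation on the pair $(P^-,P^--1)$ finishes --- which is exactly the paper's argument in slightly different clothing.
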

\begin{proof}
Suppose not.  Then for each $j\in \{q(i),\ldots ,p(i)\}$ we can choose some integer $m_j\in \T$ such that $m_j$ blocks $i+jN$.
Since both $i+p(i)N$ and $i+q(i)N$ are in $\mathcal{S}$, we can take $m_{p(i)}=m_{q(i)}=0$.  
Now let
$$X_{\pm} = \{j\in \{q(i),\ldots ,p(i)\} \colon m_j - (i+jN) \in \mathcal{S}_{\pm}\}.$$ 
Then $X_{+}$ and $X_{-}$ are disjoint and their union is all of $\{q(i),\ldots ,p(i)\}$.  Moreover, since $m_{p(i)}=m_{q(i)}=0$ and $q(i)<0<p(i)$, we have
$q(i)\in X_{+}$ and $p(i)\in X_{-}$.  In particular, there must exist some $j\in \{q(i),\ldots ,p(i)-1\}$ such that
$j\in X_{+}$ and $j+1\in X_{-}$.
Given such a $j$, we then have
$$m_j - (i+jN)\in \mathcal{S}_{+}\qquad {\rm and}\qquad m_{j+1} - (i+(j+1)N)\in \mathcal{S}_{-}.$$
So we may write $m_j - (i+jN) = N + k_1$ and
$m_{j+1}-(i+(j+1)N) = -N-k_2$, where $k_1$ and $k_2$ are $\NN$-linear combinations of the $a_i$. 
  Subtracting these two equalities, we see
$$m_j - m_{j+1}+N = \left( m_j - (i+jN) \right) - \left(m_{j+1}-(i+(j+1)N)\right) = 2N + k_1+k_2.$$
In particular,
$m_j-m_{j+1}= N+k_1+k_2\in \mathcal{S}_+$.  But this contradicts the fact that $m_j,m_{j+1}\in \T\in \TT$.  The result follows.
\end{proof}
\begin{corollary}\label{cor:size}
Let $N$ be a $\NN$-linear combination of $a_i$. 
Let $\T$ be a maximal element in $\TT$. 
For every $0\leq i<N$ there exists exactly one element $m\in \T$ such that $m\equiv i \,(N)$. In particular, all maximal elements of $\TT$ have size $N$.
\end{corollary}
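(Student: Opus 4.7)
My strategy is to combine Lemma \ref{lem: i} (for existence) with a direct uniqueness argument based on the defining property of $\TT$, after first reducing to the case $0\in\T$. The first step is to observe that $\TT$ is translation-invariant: since membership in $\TT$ depends only on pairwise differences $m-m'$, the map $\T\mapsto \T+c$ is a poset automorphism of $\TT$ for every $c\in\ZZ$, and in particular preserves maximality. Consequently, given any maximal $\T\in\TT$, I can pick $m_0\in\T$ and translate by $-m_0$ to obtain a maximal element of $\TT$ containing $0$. Lemma \ref{lem: i} then provides, for every $i\in\{0,\ldots,N-1\}$, an element of $\T-m_0$ (hence of $\T$) congruent to $i$ modulo $N$.

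For uniqueness, the plan is to show that no $\T\in\TT$ can contain two distinct elements in the same residue class modulo $N$. Suppose $m,m'\in\T$ with $m\equiv m'\pmod{N}$ and, after swapping if necessary, $m>m'$; then $m-m'=kN$ for some integer $k\geq 1$. The key observation, using the new hypothesis that $N$ is an $\NN$-linear combination of the $a_i$, is that $kN\in\mathcal{S}_+$: writing $N=\sum_i c_i a_i$ with $c_i\geq 0$, we get $kN=N+(k-1)N=N+\sum_i(k-1)c_i a_i$, which is visibly of the form defining $\mathcal{S}_+$. But then $m-m'\in\mathcal{S}_+\subset\mathcal{S}$, contradicting $\T\in\TT$.

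Combining the two, the residue map $\T\to\{0,\ldots,N-1\}$ sending each $m$ to $m\bmod N$ is a bijection, so $|\T|=N$, giving the ``in particular'' statement. I do not anticipate any real obstacle: the translation reduction is a formality, and the uniqueness step rests on the single elementary observation that positive integer multiples of $N$ lie in $\mathcal{S}_+$, which is precisely where the hypothesis ``$N$ is an $\NN$-linear combination of the $a_i$'' enters (and explains why it was not needed for Lemma \ref{lem: i}).
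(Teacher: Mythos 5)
Your proof is correct and takes essentially the same approach as the paper: translate to assume $0\in\T$, invoke Lemma~\ref{lem: i} for existence, and observe that $kN\in\mathcal{S}_+$ for $k\ge 1$ (using the hypothesis on $N$) to rule out two elements of $\T$ in the same residue class. If anything, your uniqueness step is phrased a bit more carefully than the paper's, which loosely says the two offending elements are "congruent to $0$ mod $N$" rather than merely congruent to each other.
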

\begin{proof}
Let $\T$ be a maximal element of $\TT$. By translation we can assume that $0\in \T$. 
By Lemma \ref{lem: i} we have that for each $i\in \{0,\ldots ,N-1\}$ there is some element of $\T$ that is congruent to $i$ mod $N$.  Thus $|\T|\ge N$.  On the other hand, by definition of $\mathcal{S}_{+}$ and the assumption on $N$ we have $\{N,2N,3N,\ldots \}\subseteq \mathcal{S}$.   If $\T$ had size strictly larger than $N$ then there would exist $m,m'\in \T$ with $m> m'$ and $m$ and $m'$ congruent to $0$ mod $N$.  But then $m-m'$ is a positive multiple of $N$ and hence in $\mathcal{S}$.  
\end{proof}

\bibliographystyle{amsalpha}

\end{document}